\author{Liran Shaul}
\address{Universiteit Antwerpen, Departement Wiskunde-Informatica, Middelheim campus,
Middelheimlaan 1,
2020 Antwerp, Belgium}
\email{Liran.Shaul@uantwerpen.be}
\newtheorem{thm}[equation]{Theorem}
\newtheorem{cor}[equation]{Corollary}
\newtheorem{prop}[equation]{Proposition}
\newtheorem{lem}[equation]{Lemma}
\theoremstyle{definition}
\newtheorem{rem}[equation]{Remark}
\newcommand{\surj}{\twoheadrightarrow}
\newcommand{\opn}{\operatorname}
\newcommand{\mfrak}[1]{\mathfrak{#1}}
\newcommand{\mrm}[1]{\mathrm{#1}}
\renewcommand{\k}{\Bbbk}
\renewcommand{\a}{\mfrak{a}}
\renewcommand{\b}{\mfrak{b}}
\numberwithin{equation}{section} 
\begin{document}

\title{Tensor product of dualizing complexes over a field}
\begin{abstract}
Let $\k$ be a field, and let $X,Y$ be two locally noetherian $\k$-schemes (respectively $\k$-formal schemes) with dualizing complexes $R_X$ and $R_Y$ respectively. We show that $R_X \boxtimes_{\k} R_Y$ (respectively its derived completion) is a dualizing complex over $X\times_{\k} Y$ if and only if $X\times_{\k} Y$ is locally noetherian of finite Krull dimension.
\end{abstract}

\thanks{The author acknowledges the support of the European Union for the ERC grant No 257004-HHNcdMir.}

\maketitle

\setcounter{tocdepth}{1}

\tableofcontents

\setcounter{section}{-1}
\section{Introduction}

Throughout this note, rings are assumed to be commutative and unital.  Given a ring $A$, we denote by $\mrm{D}(\opn{Mod} A)$ the derived category of $A$-modules, and by $\mrm{D}^{\mrm{b}}(\opn{Mod} A)$ and $\mrm{D}^{\mrm{b}}_{\mrm{f}}(\opn{Mod} A)$ its triangulated subcategories made of bounded complexes, and bounded complexes with coherent cohomology respectively. We will also use commutative DG-algebras. Given such a DG-algebra $A$, we will denote the category of differential graded $A$-modules by $\opn{DGMod} A$, and its derived category by $\mrm{D}(\opn{DGMod} A)$.

Dualizing complexes, first introduced in \cite{RD} half a century ago, are now a ubiquitous tool in commutative algebra and algebraic geometry. In this note we are concerned with dualizing complexes over a fibre product of schemes or formal schemes over a field. 

In the first section we work with ordinary schemes. Our main result in Section 1 shows that if two locally noetherian schemes $X,Y$, over a field $\k$, have dualizing complexes $R_X,R_Y$, then the only obstruction for $X\times_{\k} Y$ to possess a dualizing complex is the trivial one, namely,  $X\times_{\k} Y$ must be locally noetherian, and of finite Krull dimension. In that case we show that the box tensor product $R_X \boxtimes_{\k} R_Y$ is a dualizing complex over $X\times_{\k} Y$. This is proven in Corollary \ref{cor:main} below. If the schemes involved are of finite type over $\k$, then this is not new, and could be easily deduced from the results of \cite{RD}. In fact, in that case one can even replace $\k$ by a Gorenstein ring, assume one of $X,Y$ is flat over it, and replace tensor product with derived tensor product. We however make no finiteness assumption on either of the maps $X\to \k,Y \to \k$. 

One interesting consequence of this result in the affine case, given in Corollary \ref{cor:inj-dim} below, is the fact that for such noetherian rings, the tensor product functor $-\otimes_{\k} -:\mrm{D}^{\mrm{b}}_{\mrm{f}}(\opn{Mod} A) \times \mrm{D}^{\mrm{b}}_{\mrm{f}}(\opn{Mod} B) \to \mrm{D}^{\mrm{b}}_{\mrm{f}}(\opn{Mod} A\otimes_{\k} B)$ preserves finite injective dimension.

In section 2 we switch to the more difficult case of formal schemes. We are able to reproduce the above result in the formal case, and prove that if $\mathfrak{X}$ and $\mathfrak{Y}$ are two locally noetherian formal schemes over a field $\k$, with dualizing complex $R_{\mathfrak{X}}$ and $R_{\mathfrak{Y}}$ respectively, and if $\mathfrak{X}\times_{\k} \mathfrak{Y}$ is locally noetherian and of finite Krull dimension, then the derived completion and derived torsion of $R_{\mathcal{X}} \boxtimes_{\k} R_{\mathcal{Y}}$ are c-dualizing and t-dualizing complexes (notions that are recalled in Section 2 below) over $\mathfrak{X}\times_{\k} \mathfrak{Y}$. This is given in Theorem \ref{thm:tensor-dual-adic} below.	

To understand why the formal case is much more involved, consider the simplest corresponding affine situation, where $\k$ is a field, and $A$ and $B$ are two noetherian Gorenstein $\k$-algebras of finite Krull dimension, which are adically complete with respect to some ideals $\a \subseteq A$ and $\b \subseteq B$. The Gorenstein hypothesis implies that $A$ and $B$ are dualizing complexes over themselves, so what we need to prove is that in this situation, the completed tensor product $A\widehat{\otimes_{\k}} B$ is also a Gorenstein ring, whenever it is noetherian of finite Krull dimension. However, the ring $A\otimes_{\k} B$ is usually non-noetherian, so we do not know if the completion map $A\otimes_{\k} B \to A\widehat{\otimes_{\k}} B$ is flat, and so we do not know if in general the maps $A \to A\widehat{\otimes_{\k}} B$ and $B \to A\widehat{\otimes_{\k}} B$ are flat. This rules out attempts to prove such a result using the methods used in the corresponding discrete case (i.e, when $\a=0$ and $\b=0$, so that $A\otimes_{\k} B$ is noetherian) given in \cite{TY} and other similar papers. As a replacement for flatness, we rely heavily on the theory of weakly proregular ideals of \cite{AJL1,Sc,PSY1}. Using it and some other homological and homotopical tools, we are able to prove the above mentioned result about dualizing complexes over fiber product of formal schemes.

\textbf{Acknowledgments.} The author would like to thank Amnon Yekutieli for some useful
suggestions.

\section{Tensor product of dualizing complexes over ordinary schemes}

We shall need the following result, which is contained in the proof of \cite[Corollary 1.4]{Ka}.

\begin{prop}\label{prop:Gorfinite}
Let $A$ be a commutative noetherian ring. Assume that $A$ has a dualizing complex. Then there is a finite type $A$-algebra $A'$ which is Gorenstein of finite Krull dimension, and such that there is a surjection $A' \surj A$.
\end{prop}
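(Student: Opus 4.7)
The plan is to invoke and refine Kawasaki's main result from \cite{Ka}, which asserts that a noetherian ring admits a dualizing complex if and only if it is a homomorphic image of a finite-dimensional Gorenstein ring. In that theorem, however, the ambient Gorenstein ring is not a priori of finite type over $A$; the additional content of Proposition \ref{prop:Gorfinite} is precisely this finite-type refinement, which is already implicit in Kawasaki's construction.

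I would begin by treating the model case where $A$ is Cohen--Macaulay admitting a canonical module $\omega_A$. In this case, the trivial extension (idealization) $A' := A \ltimes \omega_A$ is Gorenstein by a theorem of Reiten, is module-finite over $A$ (hence of finite type over $A$), satisfies $\dim A' = \dim A < \infty$, and the projection onto the first factor furnishes a natural surjection $A' \surj A$. This settles the Cohen--Macaulay case in one step.

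For the general case, where the dualizing complex $R$ of $A$ may have several non-vanishing cohomology modules and $A$ need not be Cohen--Macaulay, I would follow Kawasaki's inductive strategy. Using the Cousin complex of $R$, whose terms are explicit finitely generated $A$-modules indexed by codimension, one builds $A'$ by iteratively attaching these terms to $A$ through trivial extensions, keeping the construction of finite type over $A$ at each stage. At each step the intermediate ring admits a dualizing complex obtained by truncating the Cousin decomposition of $R$, and a natural surjection back down to the previous stage.

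The main obstacle, and the heart of the argument, is to verify that the final ring $A'$ thus produced is Gorenstein of finite Krull dimension. The reduction to the Cohen--Macaulay case (through the Cousin filtration) is delicate because one must simultaneously control (i) finite generation of the extensions over $A$, (ii) the preservation of a dualizing complex at each step, and (iii) the Gorenstein property of the final output. This is exactly the content of the proof of \cite[Corollary 1.4]{Ka}, and once it is in place the desired finite type $A$-algebra $A'$ is obtained.
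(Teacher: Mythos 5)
Your Cohen--Macaulay case is fine and matches the paper's final step: for $A$ CM with canonical module, the idealization $A\ltimes\omega_A$ is Gorenstein, module-finite over $A$, of the same finite dimension, and surjects onto $A$ (the paper cites Sharp's theorem at this point rather than Reiten's, but it is the same construction). The problem is your treatment of the general case. What you describe --- building $A'$ by ``iteratively attaching the terms of the Cousin complex of $R$ to $A$ through trivial extensions'' --- is not Kawasaki's argument, and as a strategy it would fail: a trivial extension $A\ltimes M$ is Gorenstein only when $A$ is already Cohen--Macaulay and $M$ is a canonical module, so no sequence of idealizations by Cousin-complex terms can upgrade a non-CM ring to a Gorenstein one. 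Kawasaki's actual route in \cite[Corollary 1.4]{Ka} is different: first one replaces $A$ by a finite type $A$-algebra surjecting onto $A$ whose codimension function is constant on its associated primes (\cite[Lemma 5.5]{Ka}); then the heart of the matter is \emph{arithmetic Macaulayfication}, namely the existence of an ideal $I\subseteq A$ of positive height whose Rees algebra $A(I)=\oplus_{n\ge 0}(Ix)^n\subseteq A[x]$ is Cohen--Macaulay; since $A$ is a quotient of $A(I)$, this reduces to the CM case, which is then handled by the trivial extension.

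This matters because the entire added content of the proposition, beyond the literal statement of Kawasaki's theorem, is the verification that each intermediate ring in his construction is of finite type over $A$ --- and that check has to be made for the \emph{actual} intermediate objects. For the ring supplied by \cite[Lemma 5.5]{Ka} finite-typeness is part of that lemma; for the Rees algebra one observes that if $I=(f_1,\dots,f_m)$ then $A[x_1,\dots,x_m]\to A(I)$, $x_i\mapsto f_i x$, is surjective; and the trivial extension is module-finite. Because your sketch substitutes a different (and unworkable) chain of intermediate steps, this finite-type bookkeeping is never actually carried out where it is needed, so the proposal has a genuine gap in the non-Cohen--Macaulay case. Deferring to ``the content of the proof of \cite[Corollary 1.4]{Ka}'' does not close it, since the point of the proposition is precisely to re-examine that proof step by step and record that each step stays within finite type $A$-algebras.
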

\begin{proof}
By \cite[Corollary 1.4]{Ka}, there is a Gorenstein ring $A'$ of finite Krull dimension, and a surjection $A'\surj A$, so it is enough to verify that this ring is a finitely generated $A$-algebra. The first step in the proof of \cite[Corollary 1.4]{Ka} reduces to the case where the codimension function of $A$ is constant on its associated primes. One way to do this is using \cite[Lemma 5.5]{Ka}, which says that if a ring $A$ is noetherian, universally catenary, and has a codimension function (all these properties are satisfied by a ring possessing a dualizing complex), then there exist a finite type $A$-algebra $B$ whose codimension function is constant on its associated primes, and such that there is a surjection $B \surj A$. Hence, we may assume without loss of generality that the codimension function of our $A$ is constant on its associated primes. Next, for such an $A$, it is shown in \cite{Ka}, that there some ideal $I\subseteq A$ of positive height, such that the Rees algebra $A(I) = \oplus_{n\ge 0} (Ix)^n \subseteq A[x]$ is Cohen-Macaulay. Note that if $I=(f_1,\dots,f_m)$, then the map $A[x_1,\dots,x_m] \to A(I)$ given by $x_i \mapsto f_i\cdot x$ is surjective, so that $A(I)$ is of finite type over $A$. As $A$ is a quotient of $A(I)$, we reduce to the case where $A$ is Cohen-Macaulay. Then, in the final step of \cite[Corollary 1.4]{Ka}, it is observed that by \cite[Theorem 4.3]{Sh}, there is some finitely generated $A$-module $M$, such that the trivial extension ring $A' := A\times M$ (in the sense of \cite[Chapter 25, page 191]{Ma}) is a finite dimensional Gorenstein ring. As there is a surjection $A' \to A$, and as $A'$ is clearly a finite type $A$-algebra, we are done.
\end{proof}

\begin{lem}\label{lem:canonical-map}
Let $\k$ be a field, and let $A$ and $B$ be two noetherian $\k$-algebras with dualizing complexes $R_A$ and $R_B$ respectively, such that $A\otimes_{\k} B$ is a noetherian ring. Then the canonical map
\[
A\otimes_{\k} B \to \mrm{R}\opn{Hom}_{A\otimes_{\k} B}(R_A \otimes_{\k} R_B,R_A\otimes_{\k} R_B)
\]
is an isomorphism in $\mrm{D}(\opn{Mod} A\otimes_{\k} B)$.
\end{lem}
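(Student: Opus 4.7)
The plan is to represent $R_A\otimes_{\k}R_B$ by an explicit K-projective complex obtained by tensoring resolutions of $R_A$ and $R_B$ separately, and then compute the derived $\opn{Hom}$ by a termwise Künneth identification.

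Since $A$ is noetherian and $R_A\in\mrm{D}^{\mrm{b}}_{\mrm{f}}(\opn{Mod} A)$, I would choose a bounded-above resolution $P\to R_A$ by finitely generated free $A$-modules, and analogously $Q\to R_B$ over $B$. Because $\k$ is a field, the functor $-\otimes_{\k}-$ is exact, so $P\otimes_{\k}Q\to R_A\otimes_{\k}R_B$ is a quasi-isomorphism in $\mrm{D}(\opn{Mod} A\otimes_{\k}B)$. Each $P^i\otimes_{\k}Q^j$ is a finitely generated free $A\otimes_{\k}B$-module, and the total complex is bounded above, so $P\otimes_{\k}Q$ is K-projective. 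Hence
\[
\mrm{R}\opn{Hom}_{A\otimes_{\k}B}(R_A\otimes_{\k}R_B,R_A\otimes_{\k}R_B)\simeq \opn{Hom}_{A\otimes_{\k}B}(P\otimes_{\k}Q,R_A\otimes_{\k}R_B).
\]

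Next I would apply iterated Hom-tensor adjunction to rewrite this as $\opn{Hom}_A(P,\opn{Hom}_B(Q,R_A\otimes_{\k}R_B))$, and then commute the extra tensor factors past each Hom using that for a finitely generated free $A$-module $P^i$, any $A$-module $M$, and any $\k$-vector space $N$, the canonical map
\[
\opn{Hom}_A(P^i,M)\otimes_{\k}N\to \opn{Hom}_A(P^i,M\otimes_{\k}N)
\]
is an isomorphism. Since $R_A$ and $R_B$ are bounded, in each fixed total degree the products implicit in $\opn{Hom}^{\bullet}$ collapse to finite direct sums, and these termwise isomorphisms assemble into a natural isomorphism of complexes
\[
\opn{Hom}_{A\otimes_{\k}B}(P\otimes_{\k}Q,R_A\otimes_{\k}R_B)\cong \opn{Hom}_A(P,R_A)\otimes_{\k}\opn{Hom}_B(Q,R_B).
\]

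Finally, biduality for the dualizing complexes yields $\opn{Hom}_A(P,R_A)\simeq\mrm{R}\opn{Hom}_A(R_A,R_A)\simeq A$ and similarly over $B$, so the right-hand side is quasi-isomorphic to $A\otimes_{\k}B$, and unwinding the chain of identifications shows the resulting composite is the canonical map of the statement. The main bookkeeping obstacle is the termwise Künneth identification of Hom complexes and its compatibility with the differentials: it works precisely because each term of $P$ and $Q$ is a finitely generated free module and $R_A, R_B$ are bounded, so that all the a priori infinite products defining $\opn{Hom}^{\bullet}$ reduce to finite direct sums in every degree.
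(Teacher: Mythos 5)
Your argument is correct, but it follows a genuinely different route from the paper. The paper quotes the K\"unneth-type isomorphism $\mrm{R}\opn{Hom}_A(R_A,R_A)\otimes_{\k}\mrm{R}\opn{Hom}_B(R_B,R_B)\cong\mrm{R}\opn{Hom}_{A\otimes_{\k}B}(R_A\otimes_{\k}R_B,R_A\otimes_{\k}R_B)$ as a black box (\cite[Lemma 8.4]{YZ}), which only yields that the endomorphism complex is \emph{abstractly} isomorphic to $A\otimes_{\k}B$, and then invokes Foxby's theorem \cite[Proposition 2.3]{AIL} to upgrade this to the statement that the \emph{canonical} homothety map is an isomorphism; this is exactly where the noetherian hypothesis on $A\otimes_{\k}B$ enters. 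You instead re-prove the K\"unneth identification at the chain level (your termwise computation is essentially the proof of \cite[Lemma 8.4]{YZ}, and the finiteness bookkeeping you describe is right: with $P,Q$ bounded above by f.g.\ free modules and bounded representatives of $R_A,R_B$, every term of $P\otimes_{\k}Q$ is f.g.\ free and all products collapse to finite sums), and then track the canonical map through the identifications. What this buys is a self-contained argument that avoids Foxby's nontrivial result entirely, and in fact never uses that $A\otimes_{\k}B$ is noetherian. The one step you should spell out rather than assert is the final compatibility, since sidestepping precisely this verification is the reason the paper resorts to Foxby: with $\pi_A\colon P\to R_A$, $\pi_B\colon Q\to R_B$ the chosen resolutions, the composite $A\otimes_{\k}B\to\opn{Hom}_A(P,R_A)\otimes_{\k}\opn{Hom}_B(Q,R_B)\to\opn{Hom}_{A\otimes_{\k}B}(P\otimes_{\k}Q,R_A\otimes_{\k}R_B)$ sends $a\otimes b$ first to $(a\pi_A)\otimes(b\pi_B)$ and then to $(a\otimes b)\cdot(\pi_A\otimes\pi_B)$, which is precisely the chain-level representative of the canonical homothety with respect to the K-projective resolution $P\otimes_{\k}Q\to R_A\otimes_{\k}R_B$; since the first map is the $\k$-tensor product of the two homothety quasi-isomorphisms (quasi-isomorphisms because $R_A,R_B$ are dualizing, and their tensor is one because $\k$ is a field), and all maps are $A\otimes_{\k}B$-linear, the canonical map is an isomorphism in $\mrm{D}(\opn{Mod}A\otimes_{\k}B)$. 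With that check written out, your proof is complete and somewhat more elementary than the paper's.
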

\begin{proof}
Since $R_A$ and $R_B$ are dualizing complexes, they have finitely generated bounded cohomologies, and in particular we may assume that they are bounded. Hence, by \cite[Lemma 8.4]{YZ}, there is an isomorphism
\[
\mrm{R}\opn{Hom}_A(R_A,R_A)\otimes_{\k} \mrm{R}\opn{Hom}_B(R_B,R_B) \cong \mrm{R}\opn{Hom}_{A\otimes_{\k} B} (R_A\otimes_{\k} R_B,R_A\otimes_{\k} R_B)
\]
in $\mrm{D}(\opn{Mod} A\otimes_{\k} B)$. Again, the fact that $R_A$ and $R_B$ are dualizing complexes implies that
\[
\mrm{R}\opn{Hom}_A(R_A,R_A)\otimes_{\k} \mrm{R}\opn{Hom}_B(R_B,R_B) \cong A\otimes_{\k} B.
\]
Composing these two isomorphisms, we deduce that there is some isomorphism
\[
\mrm{R}\opn{Hom}_{A\otimes_{\k} B}(R_A \otimes_{\k} R_B,R_A\otimes_{\k} R_B) \cong A\otimes_{\k} B
\]
in $\mrm{D}(\opn{Mod} A\otimes_{\k} B)$. Hence, by an unpublished result of Foxby, given in \cite[Proposition 2.3]{AIL}, the canonical map
\[
A\otimes_{\k} B \to \mrm{R}\opn{Hom}_{A\otimes_{\k} B}(R_A \otimes_{\k} R_B,R_A\otimes_{\k} R_B)
\]
is also an isomorphism.
\end{proof}

The following lemma is probably well known. We reproduce its easy proof for the convenience of the reader:
\begin{lem}\label{lem:inj-perfect}
Let $A$ be a noetherian ring, and let $R$ be a dualizing complex over $A$. A complex $M \in \mrm{D}^{\mrm{b}}_{\mrm{f}}(\opn{Mod} A)$ has finite injective dimension over $A$ if and only if the complex $\mrm{R}\opn{Hom}_A(M,R)$ is perfect.
\end{lem}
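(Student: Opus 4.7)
The plan is to exploit the dualizing functor $D(-) := \mrm{R}\opn{Hom}_A(-, R)$, which, since $R$ is dualizing, restricts to a contravariant auto-equivalence of $\mrm{D}^{\mrm{b}}_{\mrm{f}}(\opn{Mod} A)$ satisfying $D \circ D \cong \mrm{id}$. The strategy is to show that $M \in \mrm{D}^{\mrm{b}}_{\mrm{f}}$ has finite injective dimension over $A$ if and only if $D(M)$ has finite projective dimension over $A$; combined with the classical fact that an object of $\mrm{D}^{\mrm{b}}_{\mrm{f}}$ over a noetherian ring is perfect precisely when it has finite projective dimension, this yields the lemma.

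The bridge between the two finiteness conditions is the natural isomorphism, valid for $M, N \in \mrm{D}^{\mrm{b}}_{\mrm{f}}$:
\[
\mrm{R}\opn{Hom}_A(N, M) \;\cong\; \mrm{R}\opn{Hom}_A\bigl(N, D(D(M))\bigr) \;\cong\; \mrm{R}\opn{Hom}_A\bigl(D(M), D(N)\bigr).
\]
The first isomorphism is just $D^2 \cong \mrm{id}$ on $\mrm{D}^{\mrm{b}}_{\mrm{f}}$. The second comes from the tensor-Hom adjunction applied twice: both $\mrm{R}\opn{Hom}_A(N, D(D(M)))$ and $\mrm{R}\opn{Hom}_A(D(M), D(N))$ are naturally isomorphic to $D(N \otimes^{\mrm{L}}_A D(M))$.

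I then translate the two finiteness conditions into boundedness of derived Hom. On the one hand, $M$ has finite injective dimension if and only if $\mrm{R}\opn{Hom}_A(-, M)$ has uniformly bounded cohomology as its first argument ranges over finitely generated $A$-modules; equivalently, over $\mrm{D}^{\mrm{b}}_{\mrm{f}}$. On the other hand, $D(M) \in \mrm{D}^{\mrm{b}}_{\mrm{f}}$ has finite projective dimension if and only if $\mrm{R}\opn{Hom}_A(D(M), -)$ has uniformly bounded cohomology on $\mrm{D}^{\mrm{b}}_{\mrm{f}}$. Since the involution $N \mapsto D(N)$ is a bijection of $\mrm{D}^{\mrm{b}}_{\mrm{f}}$ with itself, the displayed isomorphism identifies these two conditions, giving both directions of the lemma simultaneously.

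I do not anticipate a genuine obstacle in this proof; the only step deserving care is verifying the second isomorphism above, but it is a routine consequence of the tensor-Hom adjunction and the finiteness hypotheses on $M, N, R$, which ensure that all derived functors involved behave well.
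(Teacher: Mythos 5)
Your argument is correct in substance, but it takes a more self-contained route than the paper. The paper's proof is two citations: since $\mrm{R}\opn{Hom}_A(M,R)$ has finitely generated cohomology, it is perfect if and only if it has finite flat dimension (Avramov--Foxby), and by \cite[Proposition V.2.6]{RD} together with biduality this happens if and only if $M \cong \mrm{R}\opn{Hom}_A(\mrm{R}\opn{Hom}_A(M,R),R)$ has finite injective dimension. What you do is essentially re-prove the content of \cite[Proposition V.2.6]{RD}: your swap isomorphism $\mrm{R}\opn{Hom}_A(N,M) \cong \mrm{R}\opn{Hom}_A(D(M),D(N))$, obtained from two hom-tensor adjunctions plus biduality, is exactly the mechanism that exchanges finite injective dimension of $M$ with finite projective dimension of $D(M)$, and the classical fact ``perfect $=$ finite projective dimension'' for objects of $\mrm{D}^{\mrm{b}}_{\mrm{f}}(\opn{Mod} A)$ over a noetherian ring plays the role of the Avramov--Foxby input (they work with flat dimension, which coincides with projective dimension for finitely generated cohomology over a noetherian ring). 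So your version buys independence from the cited duality result, at the cost of redoing a classical argument.

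One formulation needs repair. ``Uniformly bounded cohomology as the argument ranges over $\mrm{D}^{\mrm{b}}_{\mrm{f}}$'' is literally false for any nonzero $M$: replacing the test object $N$ by a shift $N[-k]$ shifts $\mrm{R}\opn{Hom}_A(N,M)$ accordingly, so no uniform bound over all of $\mrm{D}^{\mrm{b}}_{\mrm{f}}$ can exist. The correct tests are against finitely generated modules (Baer's criterion for injective dimension; the standard finitely-generated-module test for projective dimension of complexes with finitely generated cohomology), and then the point your sketch skips is this: when you test, say, the projective dimension of $D(M)$ against a finitely generated module $N'$, you must write $N' \cong D(N)$ with $N = D(N')$, and $N$ is no longer a module but a complex. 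What saves the argument is that $R$ has finite injective dimension, so $D$ of any module has cohomological amplitude contained in a fixed interval independent of the module, and finite injective dimension of $M$ then bounds the cohomology of $\mrm{R}\opn{Hom}_A(N,M)$ above in terms of that fixed interval (by truncation induction on the amplitude of $N$); the symmetric remark handles the converse direction. With the uniform bound stated relative to the amplitude of the test object, your identification of the two conditions via the involution $N \mapsto D(N)$ is valid and the proof goes through.
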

\begin{proof}
Since $\mrm{R}\opn{Hom}_A(M,R)$ has finitely generated cohomologies, by \cite[Corollary 2.10.F]{AF}, it is perfect if and only if it has finite flat dimension, and by \cite[Proposition V.2.6]{RD}, this happens if and only if
\[
\mrm{R}\opn{Hom}_A(\mrm{R}\opn{Hom}_A(M,R),R) \cong M
\]
has finite injective dimension over $A$.
\end{proof}

\begin{lem}\label{lem:one-implies-all}
Let $\k$ be a field, and let $A$ and $B$ be two noetherian $\k$-algebras, such that $A\otimes_{\k} B$ is also noetherian. Assume that there are dualizing complexes $R_A$ over $A$ and $R_B$ over $B$, such that $R_A \otimes_{\k} R_B$ is a dualizing complex over $A\otimes_{\k} B$. Then for any dualizing complex $S_A$ over $A$, and any dualizing complex $S_B$ over $B$, the complex $S_A \otimes_{\k} S_B$ is a dualizing complex over $A\otimes_{\k} B$.
\end{lem}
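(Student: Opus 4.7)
The plan is to verify that $S_A \otimes_{\k} S_B$ satisfies the three defining properties of a dualizing complex over $A \otimes_{\k} B$: it lies in $\mrm{D}^{\mrm{b}}_{\mrm{f}}(\opn{Mod} A\otimes_{\k} B)$; the canonical homothety map to its derived endomorphism complex is an isomorphism; and it has finite injective dimension. The trick is that, although none of these checks can be run directly from scratch, each reduces to a statement about $A$ and $B$ individually by exploiting the assumed dualizing complex $R_A \otimes_{\k} R_B$ as a test object.

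Boundedness and finite generation of the cohomology of $S_A \otimes_{\k} S_B$ over $A \otimes_{\k} B$ follow immediately from the corresponding properties of $S_A$ and $S_B$ together with the noetherian hypothesis on $A \otimes_{\k} B$. For the canonical map
\[
A \otimes_{\k} B \to \mrm{R}\opn{Hom}_{A \otimes_{\k} B}(S_A \otimes_{\k} S_B,\, S_A \otimes_{\k} S_B),
\]
I would invoke Lemma \ref{lem:canonical-map} applied to the pair $(S_A, S_B)$; its hypotheses only ask that each input be a dualizing complex on its side and that $A\otimes_{\k}B$ be noetherian, so the $R$'s there can just as well be $S$'s.

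The main content is finite injective dimension, and here is where the hypothesis that $R_A \otimes_{\k} R_B$ is dualizing enters. By Lemma \ref{lem:inj-perfect}, applied over the ring $A \otimes_{\k} B$ with dualizing complex $R_A \otimes_{\k} R_B$, it suffices to show that
\[
\mrm{R}\opn{Hom}_{A \otimes_{\k} B}(S_A \otimes_{\k} S_B,\, R_A \otimes_{\k} R_B)
\]
is perfect over $A \otimes_{\k} B$. By the version of \cite[Lemma 8.4]{YZ} already used in Lemma \ref{lem:canonical-map} (with the obvious generalization allowing different source and target), this complex identifies with
\[
\mrm{R}\opn{Hom}_A(S_A, R_A) \otimes_{\k} \mrm{R}\opn{Hom}_B(S_B, R_B).
\]
Since $S_A$ is dualizing it has finite injective dimension over $A$, so Lemma \ref{lem:inj-perfect} applied to the pair $(S_A, R_A)$ says that $\mrm{R}\opn{Hom}_A(S_A, R_A)$ is perfect over $A$; similarly $\mrm{R}\opn{Hom}_B(S_B, R_B)$ is perfect over $B$. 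Representing each by a bounded complex of finitely generated projective modules and tensoring termwise shows that their $\k$-tensor product is perfect over $A \otimes_{\k} B$, as required.

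I do not expect a serious obstacle beyond checking that the finiteness conditions needed to invoke \cite[Lemma 8.4]{YZ} hold; these are automatic since every complex in sight lies in $\mrm{D}^{\mrm{b}}_{\mrm{f}}$. The conceptual point is the double use of Lemma \ref{lem:inj-perfect}: once over $A$ and $B$ to convert ``$S_A$ is dualizing'' into perfectness of $\mrm{R}\opn{Hom}_A(S_A, R_A)$, and once over $A \otimes_{\k} B$ to convert perfectness back into finite injective dimension of $S_A \otimes_{\k} S_B$.
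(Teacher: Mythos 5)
Your proposal is correct and follows essentially the same route as the paper: the homothety condition via Lemma \ref{lem:canonical-map}, and finite injective dimension by applying Lemma \ref{lem:inj-perfect} over $A\otimes_{\k} B$ together with \cite[Lemma 8.4]{YZ} to reduce to perfectness of $\mrm{R}\opn{Hom}_A(S_A,R_A)\otimes_{\k}\mrm{R}\opn{Hom}_B(S_B,R_B)$. Your explicit justification of the perfectness of each factor by a second application of Lemma \ref{lem:inj-perfect} is a detail the paper leaves implicit, but it is the same argument.
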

\begin{proof}
By Lemma \ref{lem:canonical-map}, the canonical map
\[
A\otimes_{\k} B \to \mrm{R}\opn{Hom}_{A\otimes_{\k} B}(S_A \otimes_{\k} S_B,S_A\otimes_{\k} S_B)
\]
is an isomorphism, so it is enough to show that $S_A\otimes_{\k} S_B$ has finite injective dimension over $A\otimes_{\k} B$. Since by assumption $R_A\otimes_{\k} R_B$ is a dualizing complex, by Lemma
\ref{lem:inj-perfect}, it is enough to show that the complex
\[
\mrm{R}\opn{Hom}_{A\otimes_{\k} B}(S_A,\otimes_{\k} S_B,R_A\otimes_{\k} R_B)
\]
is perfect, but this is clear, since by \cite[Lemma 8.4]{YZ}, it is isomorphic to
\[
\mrm{R}\opn{Hom}_A(S_A,R_A) \otimes_{\k} \mrm{R}\opn{Hom}_B(S_B,R_B),
\]
and because the (box) tensor product of two finitely generated projectives is a finitely generated projective.
\end{proof}

In the next lemma, we will have to use differential graded algebras, and dualizing DG-modules over them. We refer the reader to \cite{Ye2} for the terminology regarding DG-algebras used in this lemma. The lemma essentially says that if for a pair of $\k$-algebras $A',B'$, our main theorem about tensor product of dualizing complexes holds, then it also holds for any pair of quotients $A'\surj A, B'\surj B$. 

\begin{lem}\label{lem:tensors}
Let $\k$ be a field, and let $A',B'$ be two noetherian $\k$-algebras with dualizing complexes $R_A$ and $R_B$ respectively. Assume that $R_A \otimes_{\k} R_B$ is a dualizing complex over the noetherian ring $A'\otimes_{\k} B'$.
Let $A$ be an $A'$-algebra, and let $B$ be a $B'$-algebra such that the structure maps $A'\to A$ and $B'\to B$ are surjective. Then
\[
\mrm{R}\opn{Hom}_{A'}(A,R_A) \otimes_{\k} \mrm{R}\opn{Hom}_{B'}(B,R_B)
\]
is a dualizing complex over $A\otimes_{\k} B$.
\end{lem}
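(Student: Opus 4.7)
The strategy is to transfer $R_A \otimes_\k R_B$, dualizing over $A' \otimes_\k B'$ by hypothesis, down the finite surjection $A' \otimes_\k B' \surj A \otimes_\k B$, and then to identify the resulting complex with the claimed box tensor product via a Künneth-type formula for $\mrm{R}\opn{Hom}$.

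First, I would observe that tensoring the surjections $A'\surj A$ and $B'\surj B$ over the field $\k$ yields a surjection $A'\otimes_\k B'\surj A\otimes_\k B$, which is in particular a finite ring homomorphism, so $A\otimes_\k B$ is noetherian as a quotient of a noetherian ring. By the standard behavior of dualizing complexes under finite ring maps (cf.\ \cite[Proposition V.2.4]{RD}), combined with the hypothesis that $R_A\otimes_\k R_B$ is a dualizing complex over $A'\otimes_\k B'$, the complex
\[
\mrm{R}\opn{Hom}_{A'\otimes_\k B'}(A\otimes_\k B,\, R_A\otimes_\k R_B)
\]
is a dualizing complex over $A\otimes_\k B$.

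It would then remain to establish the Künneth isomorphism
\[
\mrm{R}\opn{Hom}_{A'\otimes_\k B'}(A\otimes_\k B,\, R_A\otimes_\k R_B) \;\cong\; \mrm{R}\opn{Hom}_{A'}(A,R_A)\otimes_\k \mrm{R}\opn{Hom}_{B'}(B,R_B),
\]
which is a close relative of \cite[Lemma 8.4]{YZ}, already invoked earlier in this section. The cleanest route is to choose semi-free commutative DG-algebra resolutions $\tilde A\to A$ over $A'$ and $\tilde B\to B$ over $B'$, in the sense of \cite{Ye2}, each built out of finitely generated free modules in every degree. Since $\k$ is a field, the tensor product $\tilde A\otimes_\k \tilde B\to A\otimes_\k B$ is again a semi-free resolution of DG-algebras over $A'\otimes_\k B'$. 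Computing both sides of the desired isomorphism using these DG-resolutions, together with bounded injective replacements of $R_A$ and $R_B$, the Künneth comparison reduces degreewise to the elementary identity $\opn{Hom}_{A'}(F,X)\otimes_\k \opn{Hom}_{B'}(G,Y) \cong \opn{Hom}_{A'\otimes_\k B'}(F\otimes_\k G,\, X\otimes_\k Y)$ for finitely generated free $F,G$, with the boundedness of the injective replacements ensuring that the infinite products appearing in the total Hom complex collapse to direct sums.

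The main obstacle is precisely this last comparison: because $A$ and $B$ may have infinite projective dimension over $A'$ and $B'$, their semi-free DG-resolutions are typically unbounded below, so the Künneth map must be controlled in the derived category for such unbounded resolutions paired with bounded dualizing complexes. The DG-algebra framework of \cite{Ye2} is exactly the tool that makes this bookkeeping rigorous, and justifies the remark at the start of the lemma that the argument genuinely needs the DG-algebra formalism.
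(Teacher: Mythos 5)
Your opening step (pushing $R_A\otimes_{\k}R_B$ down the finite surjection $A'\otimes_{\k}B'\surj A\otimes_{\k}B$ via \cite[Proposition V.2.4]{RD}) and your choice of DG-algebra resolutions coincide with the paper, but your argument skips the point the lemma is actually about. To transfer the dualizing property from $\mrm{R}\opn{Hom}_{A'\otimes_{\k}B'}(A\otimes_{\k}B,R_A\otimes_{\k}R_B)$ to $\mrm{R}\opn{Hom}_{A'}(A,R_A)\otimes_{\k}\mrm{R}\opn{Hom}_{B'}(B,R_B)$ you need the K\"unneth comparison to be an isomorphism in $\mrm{D}(\opn{Mod} A\otimes_{\k}B)$, i.e.\ to be $A\otimes_{\k}B$-linear. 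The natural comparison map --- whether built from ordinary projective resolutions of $A$ and $B$ as in \cite[Lemma 8.4]{YZ}, or from your semi-free DG-algebra resolutions $\widetilde{A},\widetilde{B}$ --- is only linear over $A'\otimes_{\k}B'$, respectively over $\widetilde{A}\otimes_{\k}\widetilde{B}$, and an isomorphism over the smaller (DG) ring does not by itself allow you to conclude anything about dualizing complexes over $A\otimes_{\k}B$. This is exactly the difficulty the paper flags (``the author does not know how to show that there is such an $A\otimes_{\k}B$-linear isomorphism''). Your proposal instead locates the ``main obstacle'' in the bookkeeping for unbounded resolutions, which is the comparatively routine part: it is handled by the finite generation of the terms $\widetilde{A}^i,\widetilde{B}^i$ and the boundedness of $R_A,R_B$, as in the proof of \cite[Lemma 8.4]{YZ}.

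Your route can be completed, but it needs an ingredient you never invoke: since $\widetilde{A}\otimes_{\k}\widetilde{B}\to A\otimes_{\k}B$ is a quasi-isomorphism of DG-algebras, the restriction functor $\mrm{D}(\opn{Mod} A\otimes_{\k}B)\to\mrm{D}(\opn{DGMod}\widetilde{A}\otimes_{\k}\widetilde{B})$ is an equivalence, in particular fully faithful; so after checking (with some care about over which DG-algebra each intermediate quasi-isomorphism is linear) that the two complexes of $A\otimes_{\k}B$-modules become isomorphic in $\mrm{D}(\opn{DGMod}\widetilde{A}\otimes_{\k}\widetilde{B})$, one may conclude they are isomorphic in $\mrm{D}(\opn{Mod} A\otimes_{\k}B)$ and transfer the dualizing property. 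The paper takes a different ending that avoids ever producing an $A\otimes_{\k}B$-linear isomorphism: it uses \cite[Proposition 7.5(1)]{Ye2} to see that $\mrm{R}\opn{Hom}_{A'\otimes_{\k}B'}(\widetilde{A}\otimes_{\k}\widetilde{B},R_A\otimes_{\k}R_B)$, and hence the box product, is a dualizing DG-module over $\widetilde{A}\otimes_{\k}\widetilde{B}$; it then transfers only the finite injective dimension along the quasi-isomorphism, and obtains the condition that $A\otimes_{\k}B\to\mrm{R}\opn{Hom}_{A\otimes_{\k}B}(R,R)$ is an isomorphism separately from Lemma \ref{lem:canonical-map} (Foxby's result, \cite[Proposition 2.3]{AIL}). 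Either ending works, but as written your proof is missing the step that makes the conclusion legitimate over $A\otimes_{\k}B$.
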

\begin{proof}
Since the map $A'\otimes_{\k} B' \to A\otimes_{\k} B$ is finite, it follows from \cite[Proposition V.2.4]{RD} that 
\[
\mrm{R}\opn{Hom}_{A'\otimes_{\k} B'}(A\otimes_{\k} B,R_A\otimes_{\k} R_B)
\]
is a dualizing complex over $A\otimes_{\k} B$. By \cite[Lemma 8.4]{YZ}, there is an isomorphism
\[
\mrm{R}\opn{Hom}_{A'}(A,R_A) \otimes_{\k} \mrm{R}\opn{Hom}_{B'}(B,R_B) \cong \mrm{R}\opn{Hom}_{A'\otimes_{\k} B'}(A\otimes_{\k} B,R_A\otimes_{\k} R_B)
\]
obtained by replacing $A$ and $B$ by projective resolutions over $A'$ and $B'$ respectively.
However, this isomorphism is only $A'\otimes_{\k} B'$-linear, and the author does not know how to show that there is such an $A\otimes_{\k} B$-linear isomorphism. Instead, using \cite[Proposition 2.2.8]{Av} let $A' \to \widetilde{A} \cong A$ and $B' \to \widetilde{B} \cong B$ be DG-algebra resolutions of $A'\to A$ and $B' \to B$ respectively, such that $\widetilde{A}^0=A'$ and $\widetilde{B}^0=B'$, for each $i<0$, $\widetilde{A}^i$ is a finitely generated projective $\widetilde{A}^0$-module, $\widetilde{B}^i$ is a finitely generated projective $\widetilde{B}^0$-module, and for each $i>0$, $\widetilde{A}^i = \widetilde{B}^i = 0$. Then, as shown in the proof of \cite[Lemma 8.4]{YZ}, the natural map
\[
\opn{Hom}_{A'}(\widetilde{A},R_A) \otimes_{\k} \opn{Hom}_{B'}(\widetilde{B},R_B) \to \opn{Hom}_{A'\otimes_{\k} B'}(\widetilde{A}\otimes_{\k} \widetilde{B},R_A\otimes_{\k} R_B)
\]
which is clearly $\widetilde{A}\otimes_{\k} \widetilde{B}$-linear, is an isomorphism. Since $\widetilde{A}$ is K-projective over $A'$, $\widetilde{B}$ is K-projective over $B'$, and $\widetilde{A}\otimes_{\k} \widetilde{B}$ is K-projective over $A'\otimes_{\k} B'$, we deduce that there is an isomorphism
\[
\mrm{R}\opn{Hom}_{A'}(\widetilde{A},R_A) \otimes_{\k} \mrm{R}\opn{Hom}_{B'}(\widetilde{B},R_B) \cong \mrm{R}\opn{Hom}_{A'\otimes_{\k} B'}(\widetilde{A}\otimes_{\k} \widetilde{B},R_A\otimes_{\k} R_B)
\]
in $\mrm{D}(\opn{DGMod} \widetilde{A}\otimes_{\k} \widetilde{B})$. By \cite[Proposition 7.5(1)]{Ye2}, the right hand side is a dualizing DG-module over $\widetilde{A}\otimes_{\k} \widetilde{B}$, so that, the left hand side is also a dualizing DG-module. As there are isomorphisms
\[
\mrm{R}\opn{Hom}_{A'}(\widetilde{A},R_A) \cong
\mrm{R}\opn{Hom}_{A'}(A,R_A)
\]
and
\[
\mrm{R}\opn{Hom}_{B'}(\widetilde{B},R_B) \cong 
\mrm{R}\opn{Hom}_{B'}(B,R_B)
\]
over $\widetilde{A}$ and $\widetilde{B}$ respectively, it follows that the DG-module
\[
\mrm{R}\opn{Hom}_{A'}(A,R_A) \otimes_{\k} \mrm{R}\opn{Hom}_{B'}(B,R_B)
\]
is a dualizing DG-module over $\widetilde{A}\otimes_{\k} \widetilde{B}$. Set $R = \mrm{R}\opn{Hom}_{A'}(A,R_A) \otimes_{\k} \mrm{R}\opn{Hom}_{B'}(B,R_B) \in \mrm{D}(\opn{Mod} A\otimes_{\k} B)$. Because $\widetilde{A}\otimes_{\k} \widetilde{B} \to A\otimes_{\k} B$ is a quasi-isomorphism, 
the fact that the image of $R$ in the derived category over $\widetilde{A}\otimes_{\k} \widetilde{B}$ has a finite injective dimension implies that $R$ has finite injective dimension over $A\otimes_{\k} B$. By Lemma \ref{lem:canonical-map}, the canonical map $A\otimes_{\k} B \to \mrm{R}\opn{Hom}_{A\otimes_{\k} B}(R,R)$ is an isomorphism.
Hence, $R = \mrm{R}\opn{Hom}_{A'}(A,R_A) \otimes_{\k} \mrm{R}\opn{Hom}_{B'}(B,R_B)$ is a dualizing complex over $A\otimes_{\k} B$.
\end{proof}

We now give the main result of this section.

\begin{thm}\label{thm:tensor-dual}
Let $\k$ be a field, and let $A,B$ be commutative noetherian $\k$-algebras. Assume that $A$ and $B$ have dualizing complexes. Then the ring $A\otimes_{\k} B$ has a dualizing complex if and only if $A\otimes_{\k} B$ is noetherian of finite Krull dimension. In that case, for every dualizing complexes $R_A$ over $A$ and $R_B$ over $B$, the complex $R_A \otimes_{\k} R_B$ is a dualizing complex over $A\otimes_{\k} B$.
\end{thm}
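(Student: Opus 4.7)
The only-if direction is immediate, as any ring admitting a dualizing complex is noetherian of finite Krull dimension. For the converse, suppose that $A \otimes_\k B$ is noetherian of finite Krull dimension. My plan is to use Proposition \ref{prop:Gorfinite} to reduce to a Gorenstein over-ring, verify Gorenstein-ness of a suitable tensor product, and then invoke Lemma \ref{lem:tensors} and Lemma \ref{lem:one-implies-all}. First I would choose, by Proposition \ref{prop:Gorfinite}, Gorenstein $\k$-algebras $A', B'$ of finite Krull dimension with surjections $A' \surj A$ and $B' \surj B$, where $A'$ is of finite type over $A$ and $B'$ is of finite type over $B$. Presenting $A' = A[x_1,\dots,x_n]/I$ and $B' = B[y_1,\dots,y_m]/J$, the ring $A' \otimes_\k B'$ is a quotient of the noetherian polynomial ring $(A \otimes_\k B)[x_1,\dots,x_n,y_1,\dots,y_m]$; it is therefore itself noetherian, with Krull dimension at most $\dim(A \otimes_\k B) + n + m < \infty$.

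The crux of the argument is to establish that $A' \otimes_\k B'$ is Gorenstein. The plan is a double application of the flat-descent criterion for the Gorenstein property. Since $\k$ is a field, the map $A' \to A' \otimes_\k B'$ is flat; $A'$ being Gorenstein, the criterion reduces the task to showing that each fiber $\kappa(\mfrak{p}) \otimes_\k B'$ (for $\mfrak{p} \in \opn{Spec} A'$) is Gorenstein at every prime. Applying the same reasoning to the flat map $B' \to \kappa(\mfrak{p}) \otimes_\k B'$ whose source is Gorenstein, this in turn reduces to the Gorenstein-ness of localizations of $\kappa(\mfrak{p}) \otimes_\k \kappa(\mfrak{q})$ for $\mfrak{q} \in \opn{Spec} B'$; these localizations are automatically noetherian, as they arise as localizations of fibers of the noetherian ring $A' \otimes_\k B'$. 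The input needed is then the (classical) fact that any noetherian localization of a tensor product of two field extensions of $\k$ is Gorenstein — indeed, locally a complete intersection — and this is the step I expect to require the most care.

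Granting this, $A' \otimes_\k B'$ is Gorenstein of finite Krull dimension and hence a dualizing complex over itself. Lemma \ref{lem:tensors}, applied with $R_{A'} = A'$ and $R_{B'} = B'$, then yields that
\[
\mrm{R}\opn{Hom}_{A'}(A, A') \otimes_\k \mrm{R}\opn{Hom}_{B'}(B, B')
\]
is a dualizing complex over $A \otimes_\k B$. Thus there exists at least one pair of dualizing complexes on $A$ and $B$ whose $\k$-tensor product is dualizing over $A \otimes_\k B$. Finally, Lemma \ref{lem:one-implies-all} promotes this to the asserted statement: for any dualizing complex $R_A$ over $A$ and any dualizing complex $R_B$ over $B$, the complex $R_A \otimes_\k R_B$ is dualizing over $A \otimes_\k B$.
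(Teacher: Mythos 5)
Your proposal is correct and follows essentially the same route as the paper: reduce via Proposition \ref{prop:Gorfinite} to finite-type Gorenstein covers $A'\surj A$ and $B'\surj B$, check that $A'\otimes_{\k} B'$ is noetherian, Gorenstein and of finite Krull dimension, and then conclude with Lemma \ref{lem:tensors} followed by Lemma \ref{lem:one-implies-all}. The only divergence is that where the paper simply cites \cite[Theorem 6(a)]{TY} for the Gorensteinness of $A'\otimes_{\k} B'$, you inline a proof by flat descent down to noetherian localizations of $\kappa(\mfrak{p})\otimes_{\k}\kappa(\mfrak{q})$ -- a legitimate substitute, since the fact you flag (noetherian localizations of tensor products of field extensions are complete intersections, hence Gorenstein) is indeed available in the literature and is the substance of the cited result; just note that, before invoking Lemma \ref{lem:one-implies-all}, you also implicitly use \cite[Proposition V.2.4]{RD} to see that $\mrm{R}\opn{Hom}_{A'}(A,A')$ and $\mrm{R}\opn{Hom}_{B'}(B,B')$ are themselves dualizing complexes over $A$ and $B$, as the paper does explicitly.
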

\begin{proof}
The only if part is well known. Assume that $A\otimes_{\k} B$ is noetherian of finite Krull dimension.
Let $A'\surj A$ and $B'\surj B$ be the Gorenstein rings guaranteed to exist from Proposition \ref{prop:Gorfinite}. Since $A\otimes_{\k} B$ is noetherian of finite Krull dimension, and since $A'$ (respectively $B'$) is a finite type $A$ (resp. $B$)-algebra, it follows that $A'\otimes_{\k} B'$ is also noetherian of finite Krull dimension. Hence, by \cite[Theorem 6(a)]{TY}, the ring $A'\otimes_{\k} B'$ is also Gorenstein. Let $R := \mrm{R}\opn{Hom}_{A'}(A,A')$, and let $S := \mrm{R}\opn{Hom}_{B'}(B,B')$. As $A'$ is Gorenstein of finite Krull dimension, $A'$ is a dualizing complex over $A'$, so that by \cite[Proposition V.2.4]{RD} $R$ is a dualizing complex over $A$, and in the same manner, $S$ is a dualizing complex over $B$. Similarly, since $A'\otimes_{\k} B'$ is Gorenstein of finite Krull dimension, $A'\otimes_{\k} B'$ is a dualizing complex over $A'\otimes_{\k} B'$. Thus, the conditions of Lemma \ref{lem:tensors} are satisfied for $A'\to A$ and $B'\to B$, so that
\[
\mrm{R}\opn{Hom}_{A'}(A,A')\otimes_{\k} \mrm{R}\opn{Hom}_{B'}(B,B') = R \otimes_{\k} S
\]
is a dualizing complex over $A\otimes_{\k} B$. Hence, by Lemma \ref{lem:one-implies-all}, the same is true for the complex $R_A \otimes_{\k} R_B$.
\end{proof}

Since one can check the property of being a dualizing complex on an affine open cover (because of \cite[Lemma II.7.16]{RD}), we obtain:

\begin{cor}\label{cor:main}
Let $\k$ be a field, and let $X,Y$ be two locally noetherian $\k$-schemes with dualizing complexes $R_X$ and $R_Y$ respectively. If $X\times_{\k} Y$ is locally noetherian of finite Krull dimension, then $R_X \boxtimes_{\k} R_Y$ is a dualizing complex over $X\times_{\k} Y$.
\end{cor}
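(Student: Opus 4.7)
The strategy is to reduce the corollary to its affine content in Theorem \ref{thm:tensor-dual} via the local nature of the dualizing property. First I would choose affine open covers $\{U_i = \opn{Spec}(A_i)\}_{i\in I}$ of $X$ and $\{V_j = \opn{Spec}(B_j)\}_{j\in J}$ of $Y$ with each $A_i$ and $B_j$ a noetherian $\k$-algebra, which is possible because $X$ and $Y$ are locally noetherian. The fiber products $U_i \times_{\k} V_j \cong \opn{Spec}(A_i \otimes_{\k} B_j)$ then form an affine open cover of $X \times_{\k} Y$, and the restrictions $R_X|_{U_i}$ and $R_Y|_{V_j}$ correspond to dualizing complexes $R_{A_i}$ over $A_i$ and $R_{B_j}$ over $B_j$ respectively.

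Second, I would check that Theorem \ref{thm:tensor-dual} applies to each pair $(A_i,B_j)$. The hypothesis that $X\times_{\k} Y$ is locally noetherian of finite Krull dimension descends to affine opens: each $A_i \otimes_{\k} B_j$, being the ring of global sections of the affine open subscheme $U_i \times_{\k} V_j$, is noetherian, and its Krull dimension is bounded above by that of $X\times_{\k} Y$, hence finite. Theorem \ref{thm:tensor-dual} therefore yields that $R_{A_i} \otimes_{\k} R_{B_j}$ is a dualizing complex over $A_i \otimes_{\k} B_j$ for every $i,j$.

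Third, I would identify the restriction of $R_X \boxtimes_{\k} R_Y$ to $U_i \times_{\k} V_j$ with the quasicoherent sheaf associated to $R_{A_i} \otimes_{\k} R_{B_j}$; this is essentially the defining property of the external tensor product, and presents no difficulty over the field $\k$. Then, by \cite[Lemma II.7.16]{RD} (the local-global principle cited by the author just before the corollary), the property of being a dualizing complex can be checked on an affine open cover, so the conclusion of the previous step at each index $(i,j)$ immediately gives that $R_X \boxtimes_{\k} R_Y$ is a dualizing complex on $X\times_{\k} Y$. I do not anticipate any real obstacle here: all the substantive work is concentrated in Theorem \ref{thm:tensor-dual}, and what remains is a routine application of the local-to-global mechanism together with the elementary observation that finite Krull dimension is inherited by open subschemes.
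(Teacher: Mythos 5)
Your argument is exactly the paper's: the author deduces the corollary from Theorem \ref{thm:tensor-dual} in one line, invoking \cite[Lemma II.7.16]{RD} to check the dualizing property on the affine open cover $\{U_i\times_{\k} V_j\}$, which is precisely the reduction you spell out. Your additional observations (noetherianity and finite Krull dimension of each $A_i\otimes_{\k} B_j$, and the identification of $(R_X\boxtimes_{\k} R_Y)|_{U_i\times_{\k} V_j}$ with $R_{A_i}\otimes_{\k} R_{B_j}$) are correct and simply make explicit what the paper leaves implicit.
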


\begin{cor}\label{cor:inj-dim}
Let $\k$ be a field, and let $A,B$ be two $\k$-algebras. Assume that $A$ and $B$ have dualizing complexes $R_A$ and $R_B$ respectively, and that $A\otimes_{\k} B$ is noetherian of finite Krull dimension. Given a complex $M\in \mrm{D}^{\mrm{b}}_{\mrm{f}}(\opn{Mod} A)$ and a complex $N\in \mrm{D}^{\mrm{b}}_{\mrm{f}}(\opn{Mod} B)$, if $M$ has finite injective dimension over $A$, and $N$ has finite injective dimension over $B$, then $M\otimes_{\k} N$ has finite injective dimension over $A\otimes_{\k} B$.
\end{cor}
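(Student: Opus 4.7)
The plan is to chain together Theorem~\ref{thm:tensor-dual}, Lemma~\ref{lem:inj-perfect}, and the K\"unneth-type isomorphism \cite[Lemma 8.4]{YZ}, so that the statement collapses to the fact that the $\k$-tensor product of two perfect complexes is again perfect.

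First I would invoke Theorem~\ref{thm:tensor-dual}: under our hypotheses $R_A\otimes_{\k} R_B$ is a dualizing complex over $A\otimes_{\k} B$. I would then check that $M\otimes_{\k} N\in\mrm{D}^{\mrm{b}}_{\mrm{f}}(\opn{Mod} A\otimes_{\k} B)$, which is immediate from the K\"unneth formula $\mrm{H}^n(M\otimes_{\k} N)\cong\bigoplus_{i+j=n}\mrm{H}^i(M)\otimes_{\k}\mrm{H}^j(N)$, valid because $\k$ is a field. By Lemma~\ref{lem:inj-perfect}, the hypotheses on $M$ and $N$ translate to $\mrm{R}\opn{Hom}_A(M,R_A)$ being perfect over $A$ and $\mrm{R}\opn{Hom}_B(N,R_B)$ being perfect over $B$, while the desired conclusion translates to $\mrm{R}\opn{Hom}_{A\otimes_{\k} B}(M\otimes_{\k} N,\,R_A\otimes_{\k} R_B)$ being perfect over $A\otimes_{\k} B$. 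The bridge is the isomorphism
\[
\mrm{R}\opn{Hom}_{A\otimes_{\k} B}(M\otimes_{\k} N,\,R_A\otimes_{\k} R_B)\cong \mrm{R}\opn{Hom}_A(M,R_A)\otimes_{\k}\mrm{R}\opn{Hom}_B(N,R_B)
\]
in $\mrm{D}(\opn{Mod} A\otimes_{\k} B)$ supplied by \cite[Lemma 8.4]{YZ}, after which the right-hand side is perfect over $A\otimes_{\k} B$ because a $\k$-tensor of a bounded complex of finitely generated projective $A$-modules with a bounded complex of finitely generated projective $B$-modules is a bounded complex of finitely generated projective $A\otimes_{\k} B$-modules, which is the observation already used at the end of Lemma~\ref{lem:one-implies-all}.

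I do not anticipate a real obstacle. The one point worth watching is the $A\otimes_{\k} B$-linearity of the K\"unneth isomorphism: in Lemma~\ref{lem:tensors} this forced a DG-algebra detour because the modules there were only $A'$- and $B'$-linear, but here $M$ and $N$ are already honest $A$- and $B$-modules, so \cite[Lemma 8.4]{YZ} applies directly and no resolution is required.
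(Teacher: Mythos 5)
Your proposal is correct and follows essentially the same route as the paper: Lemma~\ref{lem:inj-perfect} to convert finite injective dimension into perfectness, the isomorphism of \cite[Lemma 8.4]{YZ}, the fact that the $\k$-tensor product of perfect complexes is perfect, and Theorem~\ref{thm:tensor-dual} to know $R_A\otimes_{\k}R_B$ is dualizing. The only cosmetic difference is that you apply the biconditional of Lemma~\ref{lem:inj-perfect} directly to $M\otimes_{\k}N$ (after checking via K\"unneth that it lies in $\mrm{D}^{\mrm{b}}_{\mrm{f}}(\opn{Mod} A\otimes_{\k}B)$), whereas the paper phrases the last step through biduality against $R_A\otimes_{\k}R_B$; both amount to the same argument.
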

\begin{proof}
By Lemma \ref{lem:inj-perfect}, the complexes $\mrm{R}\opn{Hom}_A(M,R_A)$ and $\mrm{R}\opn{Hom}_B(N,R_B)$ are perfect over $A$ and $B$ respectively. Hence, the complex $\mrm{R}\opn{Hom}_A(M,R_A)\otimes_{\k}\mrm{R}\opn{Hom}_B(N,R_B)$ is perfect over $A\otimes_{\k} B$. By \cite[Lemma 8.4]{YZ}, there is an isomorphism 
\[
\mrm{R}\opn{Hom}_A(M,R_A)\otimes_{\k}\mrm{R}\opn{Hom}_B(N,R_B) \cong \mrm{R}\opn{Hom}_{A\otimes_{\k} B}(M\otimes_{\k} N,R_A\otimes_{\k} R_B),
\]
and since by Theorem \ref{thm:tensor-dual}, $R_A\otimes_{\k} R_B$ is a dualizing complex over $A\otimes_{\k} B$, we have that 
\[
M \otimes_{\k} N \cong \mrm{R}\opn{Hom}_{A\otimes_{\k} B}(\mrm{R}\opn{Hom}_{A\otimes_{\k} B}(M\otimes_{\k} N,R_
A\otimes_{\k} R_B),R_A\otimes_{\k} R_B)
\]
so the result follows from applying Lemma \ref{lem:inj-perfect} again.
\end{proof}

\begin{rem}
The fact that Corollary \ref{cor:inj-dim} follows from the theorem about tensor product of dualizing complexes was already observed in \cite[Corollary 8.6]{YZ}, in a noncommutative situation. The result given there, in the commutative setting, makes the assumption that both $A$ and $B$ are finitely generated $\k$-algebras.
\end{rem}

\section{Tensor product of dualizing complexes over formal schemes}

We now turn to generalize Theorem \ref{thm:tensor-dual} to formal schemes. To do that, we first recall some adic homological algebra. We refer the reader to \cite{AJL1,AJL2,PSY1,PSY2,Sc} for a detailed treatment of the material below. By a preadic ring $(A,\a)$, we shall mean a commutative ring $A$ equipped with an adic topology generated by some finitely generated ideal $\a \subseteq A$ (It is important to note that we do not assume that $A$ is noetherian). Given a preadic ring $(A,\a)$, there are functors $\Gamma_{\a} (-) := \varinjlim \opn{Hom}_A(A/{\a}^n,-) $ and $\Lambda_{\a}(-) := \varprojlim A/{\a}^n \otimes_A - $ called the $\a$-torsion and $\a$-completion functors. These are both additive functors $\opn{Mod} A \to \opn{Mod} A$. The $A$-module $\widehat{A} := \Lambda_{\a}(A)$ has a structure of a commutative ring, and there is a natural map $A\to \widehat{A}$. If this map is bijective then we will call $(A,\a)$ an adic ring, and say that $A$ is $\a$-adically complete. For any $M \in \opn{Mod} A$, the $A$-modules $\Gamma_{\a}(M)$ and $\Lambda_{\a}(M)$ naturally carry a $\widehat{A}$-module structure, so that we obtain functors $\widehat{\Gamma}_{\a}, \widehat{\Lambda}_{\a} : \opn{Mod} A \to \opn{Mod} \widehat{A}$ defined by exactly the same formulas as $\Gamma_{\a}$ and $\Lambda_{\a}$. 
The derived functors \[
\mrm{R}\Gamma_{\a}, \mrm{L}\Lambda_{\a} :\mrm{D}(\opn{Mod} A) \to \mrm{D}(\opn{Mod} A)
\]
and
\[
\mrm{R}\widehat{\Gamma}_{\a}, \mrm{L}\widehat{\Lambda}_{\a} : \mrm{D}(\opn{Mod} A) \to \mrm{D}(\opn{Mod} \widehat{A})
\]
exist. $\mrm{R}\Gamma_{\a}$ and $\mrm{R}\widehat{\Gamma}_{\a}$ are calculated using K-injective resolutions, while $\mrm{L}\Lambda_{\a}$ and $\mrm{L}\widehat{\Lambda}_{\a}$ are calculated using K-flat resolutions. See \cite[Section 1]{AJL1} for a proof.

For any $M \in \mrm{D}(\opn{Mod} A)$, there are canonical morphisms $\mrm{R}\Gamma_{\a}(M) \to M$ and $M \to \mrm{L}\Lambda_{\a} (M)$ in $\mrm{D}(\opn{Mod} A)$. If these maps are isomorphisms we say that $M$ is cohomologically $\a$-torsion and cohomologically $\a$-adically complete respectively. The collections of all cohomologically $\a$-torsion and cohomologically $\a$-adically complete complexes form two full triangulated subcategories of $\mrm{D}(\opn{Mod} A)$. These are denoted by $\mrm{D}(\opn{Mod} A)_{\a\opn{-tor}}$ and $\mrm{D}(\opn{Mod} A)_{\a\opn{-com}}$ respectively.

Given a ring $A$ and a finite sequence of elements $\mathbf{a}$, there is a bounded complex of free $A$-modules, $\opn{Tel}(A;\mathbf{a})$ called the telescope complex associated to $\mathbf{a}$. See \cite[Section 5]{PSY1} for its definition. If $A \to B$ is a ring homomorphism, and if $\mathbf{b}$ is the image of $\mathbf{a}$ under this map, then there is an isomorphism of complexes $\opn{Tel}(A;\mathbf{a}) \otimes_A B \to \opn{Tel}(B;\mathbf{b})$. Given an ideal $\a\subseteq A$, and a finite sequence of elements $\mathbf{a} \subseteq A$ that generates $\a$, there is a morphism of functors $\mrm{R}\Gamma_{\a}(-) \to \opn{Tel}(A;\mathbf{a}) \otimes_A -$. If this morphism is a quasi-isomorphism then $\mathbf{a}$ and $\a$ are said to be weakly proregular. See \cite[Section 2]{Sc}, and in particular \cite[Definition 2.3]{Sc}. In a noetherian ring, every ideal and every finite sequence are weakly proregular.
If the ideal $\a$ is weakly proregular, then the functors 
\begin{eqnarray}
\nonumber
\mrm{R}\Gamma_{\a}:\mrm{D}(\opn{Mod} A)_{\a\opn{-com}} \to \mrm{D}(\opn{Mod} A)_{\a\opn{-tor}},\\
\nonumber
\mrm{L}\Lambda_{\a}:\mrm{D}(\opn{Mod} A)_{\a\opn{-tor}} \to \mrm{D}(\opn{Mod} A)_{\a\opn{-com}}
\end{eqnarray}
 are quasi-inverse to each other, and induce an equivalence between these two triangulated categories, called the Matlis-Greenlees-May equivalence. If $A$ is noetherian and $\a$-adically complete, then 
\[
\mrm{D}^{\mrm{b}}_{\mrm{f}}(\opn{Mod} A) \subseteq \mrm{D}(\opn{Mod} A)_{\a\opn{-com}}.
\]

In that case, the essential image of $\mrm{D}^{\mrm{b}}_{\mrm{f}}(\opn{Mod} A)$ under the functor $\mrm{R}\Gamma_{\a}$ is denoted by $\mrm{D}^{\mrm{b}}(\opn{Mod} A)_{\a-\opn{cof}}$. This is a triangulated category, called the category of cohomologically $\a$-adically cofinite complexes, and is equivalent to the category $\mrm{D}^{\mrm{b}}_{\mrm{f}}(\opn{Mod} A)$. See \cite{PSY2} for a study of this category.

The following proposition, whose proof is immediate from the definitions will be useful in the sequel.

\begin{prop}\label{prop:QLambda}
Let $A$ be a commutative ring, let $\a \subseteq A$ be an ideal, and let $\widehat{A}:=\Lambda_{\a}(A)$. Let $Q:\mrm{D}(\opn{Mod} \widehat{A}) \to \mrm{D}(\opn{Mod} A)$ be the forgetful functor. Then there are isomorphisms
\[
Q \circ \mrm{L}\widehat{\Lambda}_{\a} (-) \cong \mrm{L}\Lambda_{\a} (-)
\]
and
\[
Q \circ \mrm{R}\widehat{\Gamma}_{\a} (-) \cong \mrm{R}\Gamma_{\a} (-)
\]
of functors $\mrm{D}(\opn{Mod} A) \to \mrm{D}(\opn{Mod} A)$.
\end{prop}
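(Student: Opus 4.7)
The plan is to reduce everything to an underived identity and then transport it to the derived level using the stated K-injective/K-flat constructions. The key observation is that, at the level of underlying abelian groups, the functors $\widehat{\Gamma}_{\a}(M) = \varinjlim \opn{Hom}_A(A/\a^n,M)$ and $\widehat{\Lambda}_{\a}(M) = \varprojlim A/\a^n \otimes_A M$ are given by exactly the same formulas as $\Gamma_{\a}$ and $\Lambda_{\a}$; the only extra data is the natural $\widehat{A}$-module structure that those limits/colimits inherit. Consequently we have equalities of functors $Q \circ \widehat{\Gamma}_{\a} = \Gamma_{\a}$ and $Q \circ \widehat{\Lambda}_{\a} = \Lambda_{\a}$ as functors $\opn{Mod} A \to \opn{Mod} A$, where on the left-hand sides we view $\widehat{\Gamma}_{\a}$ and $\widehat{\Lambda}_{\a}$ as landing in $\opn{Mod} \widehat{A}$.

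To lift this to the derived level, I would pick a single resolution in $\opn{Mod} A$ and use it to compute both derived functors simultaneously. For the torsion case, take a K-injective resolution $M \to I$ in the category of $A$-module complexes. By the construction recalled just before the proposition, $\mrm{R}\Gamma_{\a}(M) \cong \Gamma_{\a}(I)$ in $\mrm{D}(\opn{Mod} A)$ and $\mrm{R}\widehat{\Gamma}_{\a}(M) \cong \widehat{\Gamma}_{\a}(I)$ in $\mrm{D}(\opn{Mod} \widehat{A})$, both arising from the very same $I$. Applying the forgetful functor $Q$ to the latter yields $\Gamma_{\a}(I)$ by the underived identity, which gives the required isomorphism $Q \circ \mrm{R}\widehat{\Gamma}_{\a}(M) \cong \mrm{R}\Gamma_{\a}(M)$, functorially in $M$. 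The completion case is handled identically, with a K-flat resolution $P \to M$ in place of $I$ and the equality $Q \circ \widehat{\Lambda}_{\a} = \Lambda_{\a}$ in place of the torsion one.

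I do not anticipate a genuine obstacle here; the only fact that needs to be cited beyond formal manipulation is that $Q$, being restriction of scalars along $A \to \widehat{A}$, is exact and therefore sends quasi-isomorphisms to quasi-isomorphisms and descends unambiguously to derived categories. This is precisely why the statement is advertised as immediate from the definitions, and is also why both derived functors can be computed from a resolution taken in $\opn{Mod} A$ rather than in $\opn{Mod} \widehat{A}$.
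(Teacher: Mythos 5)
Your argument is correct and is exactly the one the paper intends when it calls the proof ``immediate from the definitions'': since $\mrm{R}\widehat{\Gamma}_{\a}$ and $\mrm{R}\Gamma_{\a}$ (resp.\ $\mrm{L}\widehat{\Lambda}_{\a}$ and $\mrm{L}\Lambda_{\a}$) are both computed by applying the same underlying functor to one K-injective (resp.\ K-flat) resolution taken in $\opn{Mod} A$, and $Q$ is exact restriction of scalars, the identities descend to the derived categories. Nothing further is needed.
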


\subsection{A reduction lemma}

The aim of this subsection is to prove Lemma \ref{lem:reduction-to-c-dual}, which allows us to reduce  certain questions over $\widehat{A}$ to questions over $A$.

The next lemma was inspired by a result of Yekutieli (private communication).
\begin{lem}\label{lem:rhom-qis-dg}
Let $A\to B$ be a quasi-isomorphism of commutative DG-algebras. Let $C$ be a $B$-algebra, and let $Q:\mrm{D}(\opn{DGMod} B) \to \mrm{D}(\opn{DGMod} A)$ be the forgetful functor. Then there is an isomorphism
\[
\mrm{R}\opn{Hom}_B(C,-) \cong \mrm{R}\opn{Hom}_A(C,Q(-))
\]
of functors $\mrm{D}(\opn{DGMod} B) \to \mrm{D}(\opn{Mod} C)$.
\end{lem}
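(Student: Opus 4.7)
The plan is to transport a K-projective resolution of $C$ over $A$ to a K-projective resolution of $C$ over $B$ via base change, and then invoke the hom-tensor adjunction between DG-$A$-modules and DG-$B$-modules.

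First I would choose a K-projective resolution $\varphi : P \to C$ in $\opn{DGMod} A$, and consider the DG-$B$-module $P \otimes_A B$ together with the natural $B$-linear map $\psi : P \otimes_A B \to C$ given by $p \otimes b \mapsto \varphi(p)\cdot b$, using the $B$-algebra structure on $C$. The crucial intermediate step is to verify that $\psi$ is itself a K-projective resolution of $C$ in $\opn{DGMod} B$.

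To show that $P \otimes_A B$ is K-projective over $B$, I would use the standard adjunction $\opn{Hom}_B(P\otimes_A B, N) \cong \opn{Hom}_A(P, Q(N))$ for $N \in \opn{DGMod} B$: since $Q$ preserves acyclicity and $P$ is K-projective over $A$, the right hand side sends acyclic DG-$B$-modules to acyclic complexes, which yields K-projectivity of $P\otimes_A B$ over $B$. To see that $\psi$ is a quasi-isomorphism, I would factor $\varphi$ as
\[
P \xar{p\mapsto p\otimes 1} P \otimes_A B \xar{\psi} C,
\]
where the first arrow is a quasi-isomorphism because $A \to B$ is a quasi-isomorphism of DG-algebras and $P$ is K-flat over $A$ (being K-projective). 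Since $\varphi$ is a quasi-isomorphism, so is $\psi$.

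With these in place, the proof concludes via the chain of $C$-linear isomorphisms
\[
\mrm{R}\opn{Hom}_B(C, M) \cong \opn{Hom}_B(P \otimes_A B, M) \cong \opn{Hom}_A(P, Q(M)) \cong \mrm{R}\opn{Hom}_A(C, Q(M)),
\]
functorial in $M \in \mrm{D}(\opn{DGMod} B)$. The main thing to track is the $C$-action through the adjunction isomorphism in the middle, but this is essentially automatic: $C$ acts only on the second argument $M$, and this action is respected by the forgetful functor $Q$, so all four Hom-complexes above carry canonically identified $C$-module structures.
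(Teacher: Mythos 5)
There is a genuine gap, and it sits exactly at the point you dismiss as ``essentially automatic''. In this lemma $M$ is a DG $B$-module, \emph{not} a $C$-module, so $C$ does not act on the second argument at all; the $C$-module structure on $\mrm{R}\opn{Hom}_B(C,M)$ and on $\mrm{R}\opn{Hom}_A(C,Q(M))$ comes from functoriality in the \emph{first} variable, i.e.\ from the multiplication action of $C$ on itself. Once you replace $C$ by the K-projective resolutions $P$ (over $A$) and $P\otimes_A B$ (over $B$), that structure is gone: $P$ and $P\otimes_A B$ are merely DG $A$- resp.\ $B$-modules quasi-isomorphic to $C$, they carry no strict $C$-action, and so $\opn{Hom}_B(P\otimes_A B,M)$ and $\opn{Hom}_A(P,Q(M))$ are only DG $B$- resp.\ $A$-modules. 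Your chain of isomorphisms is therefore only an isomorphism in $\mrm{D}(\opn{DGMod} A)$ (or $\mrm{D}(\opn{DGMod} B)$), not in $\mrm{D}(\opn{Mod} C)$ --- and the $C$-linearity is the entire content of the lemma; without it the statement is a standard restriction-of-scalars fact. This is precisely the linearity issue the paper is fighting throughout (see the remark in the proof of Lemma \ref{lem:tensors} that the Yekutieli--Zhang isomorphism is ``only $A'\otimes_{\k} B'$-linear'', and the footnote in Lemma \ref{lem:main-reduction}).

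The intermediate steps you do carry out are fine: $P\otimes_A B\to C$ is indeed a K-projective resolution over $B$ (your K-flatness and adjunction arguments are correct), and the adjunction isomorphism $\opn{Hom}_B(P\otimes_A B,M)\cong\opn{Hom}_A(P,Q(M))$ is $B$-linear. But to recover the $C$-action you would need something extra, e.g.\ taking $P$ to be a commutative semi-free DG-\emph{algebra} resolution of $C$ over $A$ and then working in $\mrm{D}(\opn{DGMod}(P\otimes_A B))$ together with the equivalence induced by the quasi-isomorphism $P\otimes_A B\to C$; this is the kind of device the paper uses elsewhere. The paper's own proof of this lemma sidesteps the problem differently: it never resolves $C$, but instead takes K-injective resolutions $M\to I_B$ over $B$ and $Q(M)\to I_A$ over $A$, so that the comparison map $\alpha'\colon\opn{Hom}_B(C,I_B)\to\opn{Hom}_A(C,I_A)$ is visibly $C$-linear, and then proves $\alpha'$ is a quasi-isomorphism via the map $\phi\colon I_B\to\opn{Hom}_A(B,I_A)$ and hom-tensor adjunction. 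As written, your argument does not establish the stated isomorphism of functors into $\mrm{D}(\opn{Mod} C)$.
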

\begin{proof}
Let $M \in \mrm{D}(\opn{DGMod} B)$. Let $M \to I_B$ be a K-injective resolution of $M$ over $B$, and let $Q(M) \to I_A$ be a K-injective resolution of $M$ over $A$. The functor $Q$ induces an isomorphism $I_B \cong I_A$ in $\mrm{D}(\opn{DGMod} A)$. Since $I_A$ is K-injective, there is some $A$-linear quasi-isomorphism $\alpha:I_B \to I_A$. Composition with $\alpha$ induces a map $\alpha': \opn{Hom}_B(C,I_B) \to \opn{Hom}_A(C,I_A)$, and this map is clearly $C$-linear. It is enough to show that $\alpha'$ is a quasi-isomorphism. To see this, consider the map $\phi:I_B \to \opn{Hom}_A(B,I_A)$ given by $\phi(x)(b) = \alpha(b\cdot x)$, for $b \in B$ and $x \in I_B$. This map fits into a commutative diagram
\[
\xymatrixcolsep{5pc}
\xymatrix{
I_B \ar[r]^{\phi} \ar[d]^{\alpha} & \opn{Hom}_A(B,I_A) \ar[d]\\
I_A \ar[r] & \opn{Hom}_A(A,I_A)
}
\]
Because $A\to B$ is a quasi-isomorphism, and $I_A$ is K-injective, the right vertical map is a quasi-isomorphism. Hence, $\phi$ is also a quasi-isomorphism between two K-injective DG $B$-modules, so it is a homotopy equivalence. Hence, in the commutative diagram
\[
\xymatrixcolsep{5pc}
\xymatrix{
\opn{Hom}_B(C,I_B) \ar[r]^{\alpha'}\ar[d]^{\opn{Hom}_B(1_C,\phi)} & \opn{Hom}_A(C,I_A)\ar[d] \\
\opn{Hom}_B(C,\opn{Hom}_A(B,I_A)) \ar[r] & \opn{Hom}_A(C\otimes_B B, I_A)
}
\]
the left vertical arrow induced by this homotopy equivalence is a quasi-isomorphism, while  the right vertical arrow and the bottom horizontal arrow are obviously isomorphisms. Hence, $\alpha'$ is a quasi-isomorphism, as claimed.
\end{proof}

\begin{lem}\label{lem:tor-calculation}
Let $A$ be a commutative ring, let $\a\subseteq A$ be a finitely generated weakly proregular ideal, and set $\widehat{A} :=\Lambda_{\a}(A)$. Let $J\subseteq A$ be an ideal , and assume that there are integers $m,n$, such that $\a^m \subseteq J \subseteq \a^n$. Then for all $i \ne 0$, we have that 
\[
\opn{Tor}^A_i(A/J,\widehat{A}) = 0.
\]
\end{lem}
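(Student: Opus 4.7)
The plan is to establish the stronger statement $A/J \otimes^{\mrm{L}}_A \widehat{A} \simeq A/J$ in $\mrm{D}(\opn{Mod} A)$, which immediately yields the vanishing of higher $\opn{Tor}$ upon taking cohomology.

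First, I would observe that since $\a^m \subseteq J$, the module $A/J$ is annihilated by $\a^m$ and is therefore $\a$-torsion. Using weak proregularity of $\a$ and the telescope-complex formula $\mrm{R}\Gamma_\a(-) \simeq \opn{Tel}(A;\mathbf{a}) \otimes_A -$ (valid without taking derived tensor, since $\opn{Tel}(A;\mathbf{a})$ is a bounded complex of free $A$-modules), together with the observation that each factor $\opn{Tel}(A;a_i)$ tensored with an $a_i$-power-torsion module is quasi-isomorphic to that module (because $A[a_i^{-1}] \otimes_A A/a_i^k A = 0$), one concludes that $A/J$ is cohomologically $\a$-torsion: $\mrm{R}\Gamma_\a(A/J) \simeq A/J$.

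Next, I would invoke two standard consequences of weak proregularity, both of which are part of the MGM package recalled in the excerpt: (i) $\mrm{L}\Lambda_\a(A) \simeq \widehat{A}$ in $\mrm{D}(\opn{Mod} A)$, concentrated in degree $0$, and (ii) the identity $\mrm{R}\Gamma_\a \circ \mrm{L}\Lambda_\a \simeq \mrm{R}\Gamma_\a$. Combining (i) and (ii) with $M = A$ gives $\mrm{R}\Gamma_\a(\widehat{A}) \simeq \mrm{R}\Gamma_\a(A)$, which via the telescope formula delivers the key identity
\[
\opn{Tel}(A;\mathbf{a}) \otimes_A \widehat{A} \simeq \opn{Tel}(A;\mathbf{a}) \quad \text{in } \mrm{D}(\opn{Mod} A).
\]

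The conclusion then follows from the chain of isomorphisms
\begin{align*}
A/J \otimes^{\mrm{L}}_A \widehat{A} & \simeq \mrm{R}\Gamma_\a(A/J) \otimes^{\mrm{L}}_A \widehat{A} = \bigl(\opn{Tel}(A;\mathbf{a}) \otimes_A A/J\bigr) \otimes^{\mrm{L}}_A \widehat{A} \\
& \simeq A/J \otimes^{\mrm{L}}_A \bigl(\opn{Tel}(A;\mathbf{a}) \otimes_A \widehat{A}\bigr) \simeq A/J \otimes^{\mrm{L}}_A \opn{Tel}(A;\mathbf{a}) \\
& = \opn{Tel}(A;\mathbf{a}) \otimes_A A/J \simeq \mrm{R}\Gamma_\a(A/J) \simeq A/J,
\end{align*}
where the reassociation in the second line uses the K-flatness of $\opn{Tel}(A;\mathbf{a})$ over $A$. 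The point to keep in mind is that $A$ is not assumed noetherian, so the completion map $A \to \widehat{A}$ need not be flat; the entire argument is driven by the telescope-complex machinery made available by the weak proregularity of $\a$. Once the telescope formula and the MGM identity are in place the proof is a short manipulation, but without these tools the statement is not at all obvious.
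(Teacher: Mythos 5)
Your proposal is correct and follows essentially the same route as the paper: both proofs rest on the two quasi-isomorphisms $\opn{Tel}(A;\mathbf{a})\otimes_A A/J \simeq A/J$ (the paper gets this via base change to $A/J$, where the generators become nilpotent, citing \cite[Lemma 7.4]{PSY1}, while you argue directly that $A/J$ is $\a$-power-torsion) and $\opn{Tel}(A;\mathbf{a})\otimes_A \widehat{A} \simeq \opn{Tel}(A;\mathbf{a})$ (the paper cites Greenlees--May duality, i.e.\ \cite[Lemma 7.6]{PSY1}, whereas you rederive it from the MGM identity $\mrm{R}\Gamma_{\a}\circ\mrm{L}\Lambda_{\a}\cong\mrm{R}\Gamma_{\a}$ applied to $A$), and then run the same associativity chain to conclude $A/J\otimes^{\mrm{L}}_A\widehat{A}\simeq A/J$. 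These differences are only in which part of the weak-proregularity/MGM package is invoked for the two key facts, not in the structure of the argument.
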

\begin{proof}
Let $B = A/J$. Let $\mathbf{a}$ be a finite sequence of elements that generates $\a$, and let $\mathbf{b}$ be its image in $B$. Note that by assumption, each element of $\mathbf{b}$ is nilpotent. Hence, by \cite[Lemma 7.4]{PSY1}, there is a $B$-linear homotopy equivalence $\opn{Tel}(B;\mathbf{b}) \to B$. By the base change property of the telescope complex, we deduce that there is an $A$-linear homotopy equivalence $\opn{Tel}(A;\mathbf{a}) \otimes_A B \to B$.

Since $\a$ is weakly proregular, by the Greenlees-May duality (specifically, by \cite[Item (iv) of Corollary after Theorem (0.3)*]{AJL1}, or by \cite[Lemma 7.6]{PSY1}), there is an isomorphism $ \widehat{A} \otimes_A \opn{Tel}(A;\mathbf{a})  \cong \opn{Tel}(A;\mathbf{a})$ in $\mrm{D}(\opn{Mod} A)$.

Combining these two isomorphisms, and the fact that $\opn{Tel}(A;\mathbf{a})$ is a K-flat complex, we obtain the following sequence of isomorphisms in $\mrm{D}(\opn{Mod} A)$:
\[
B \cong \opn{Tel}(A;\mathbf{a}) \otimes^{\mrm{L}}_A B \cong (\widehat{A}\otimes^{\mrm{L}}_A \opn{Tel}(A;\mathbf{a}) ) \otimes^{\mrm{L}}_A B \cong \widehat{A}\otimes^{\mrm{L}}_A  (\opn{Tel}(A;\mathbf{a})  \otimes^{\mrm{L}}_A B ) \cong \widehat{A} \otimes^{\mrm{L}}_A B.
\]
As $B$ is a complex concentrated in degree $0$, the result follows.
\end{proof}

\begin{lem}\label{lem:main-reduction}
Let $A$ be a commutative ring, let $\a\subseteq A$ be a finitely generated weakly proregular ideal, and set $\widehat{A} :=\Lambda_{\a}(A)$. Let $J\subseteq A$ be an ideal, and assume that there are integers $m,n$, such that $\a^m \subseteq J \subseteq \a^n$. Set $B=A/J$, and let $Q_A : \mrm{D}(\opn{Mod} \widehat{A}) \to \mrm{D}(\opn{Mod} A)$ be the forgetful functor. Then there is an isomorphism
\[
\mrm{R}\opn{Hom}_{\widehat{A}}(B,-) \cong \mrm{R}\opn{Hom}_A(B,Q_A(-))
\]
of functors $\mrm{D}(\opn{Mod} \widehat{A}) \to \mrm{D}(\opn{Mod} B)$.
\end{lem}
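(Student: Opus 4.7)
The plan is to reduce the statement to the standard tensor--Hom adjunction, exploiting the fact that Lemma \ref{lem:tor-calculation} allows one to transport a K-projective resolution of $B$ over $A$ to a K-projective resolution of $B$ over $\widehat{A}$.

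First, I would pick a K-projective resolution $P \to B$ of $B$ as an $A$-module, arranged so that $P$ is a bounded-above complex of free $A$-modules. Applying $-\otimes_A \widehat{A}$ produces a bounded-above complex of free $\widehat{A}$-modules $P \otimes_A \widehat{A}$, which is automatically K-projective over $\widehat{A}$. By Lemma \ref{lem:tor-calculation}, $\opn{Tor}_i^A(B,\widehat{A})$ vanishes for $i \ne 0$, so the complex $P \otimes_A \widehat{A}$ represents $B \otimes^{\mrm{L}}_A \widehat{A}$ and the natural augmentation $P \otimes_A \widehat{A} \to B$ is a quasi-isomorphism. Thus $P \otimes_A \widehat{A}$ is a K-projective resolution of $B$ over $\widehat{A}$, manufactured from the given resolution over $A$.

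With both resolutions in hand, for each $M \in \mrm{D}(\opn{Mod} \widehat{A})$ I would write down the chain of isomorphisms
\[
\mrm{R}\opn{Hom}_{\widehat{A}}(B,M) \cong \opn{Hom}_{\widehat{A}}(P \otimes_A \widehat{A}, M) \cong \opn{Hom}_A(P, Q_A(M)) \cong \mrm{R}\opn{Hom}_A(B, Q_A(M)),
\]
where the outer isomorphisms come from the K-projective resolutions just produced, and the middle one is the ordinary Hom--tensor adjunction at the level of complexes. The $B$-action on both ends arises through the augmentations $P \to B$ and $P \otimes_A \widehat{A} \to B$, and is preserved step by step; naturality in $M$ is automatic.

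The only real subtlety I foresee is upgrading a pointwise family of isomorphisms to a natural transformation of functors on $\mrm{D}(\opn{Mod} \widehat{A})$. This is handled by choosing $P$ functorially (e.g.\ by the bar resolution), or equivalently by rephrasing the argument as a derived tensor--Hom adjunction $\mrm{R}\opn{Hom}_{\widehat{A}}(B \otimes^{\mrm{L}}_A \widehat{A},-) \cong \mrm{R}\opn{Hom}_A(B, Q_A(-))$ and then invoking Lemma \ref{lem:tor-calculation} to identify $B \otimes^{\mrm{L}}_A \widehat{A}$ with $B$. The essential input is the Tor vanishing of Lemma \ref{lem:tor-calculation}; this I expect to dominate the argument even if one instead takes the slightly more abstract route through Lemma \ref{lem:rhom-qis-dg} applied to a DG-algebra resolution of $B$ over $A$ tensored up to $\widehat{A}$.
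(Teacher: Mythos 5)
Your computational core is sound, and it is a genuinely different route from the paper's: you resolve the module $B$ by a bounded-above complex $P$ of free $A$-modules and base-change it, using the Tor-vanishing of Lemma \ref{lem:tor-calculation} (together with the identification $B\otimes_A\widehat{A}\cong B$, which needs $\a^m\subseteq J$ and the finite generation of $\a$, and which you should make explicit since Lemma \ref{lem:tor-calculation} as stated only gives vanishing in nonzero degrees), to see that $P\otimes_A\widehat{A}\to B$ is a K-projective resolution over $\widehat{A}$; the paper instead resolves the ring map $A\to\widehat{A}$ by a semi-free DG-algebra $\widetilde{A}$, works with K-injective resolutions of $M$, and uses flatness of $A\to\widetilde{A}$. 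As a way of producing an isomorphism $\mrm{R}\opn{Hom}_{\widehat{A}}(B,M)\cong\mrm{R}\opn{Hom}_A(B,Q_A(M))$ in $\mrm{D}(\opn{Mod} A)$, natural in $M$, your argument works and is simpler. (Incidentally, the subtlety is not naturality in $M$: once $P$ is fixed, both $\opn{Hom}_{\widehat{A}}(P\otimes_A\widehat{A},-)$ and $\opn{Hom}_A(P,Q_A(-))$ are honest functors and the adjunction between them is natural; no functorial resolution is needed.)

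The genuine gap is the $B$-linearity, i.e.\ that the isomorphism is one of functors valued in $\mrm{D}(\opn{Mod} B)$ -- and this is precisely the point the paper's proof is engineered around (see the footnote in that proof, and the analogous discussion in Lemma \ref{lem:tensors}). The complexes $\opn{Hom}_{\widehat{A}}(P\otimes_A\widehat{A},M)$ and $\opn{Hom}_A(P,Q_A(M))$ are not complexes of $B$-modules, since $P$ is not a complex of $B$-modules; the $B$-actions on $\mrm{R}\opn{Hom}_{\widehat{A}}(B,M)$ and $\mrm{R}\opn{Hom}_A(B,Q_A(M))$ exist only through functoriality in the first variable. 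So ``the $B$-action \dots is preserved step by step'' has no chain-level meaning, and what your chain of isomorphisms actually yields is an isomorphism in $\mrm{D}(\opn{Mod} A)$ compatible with the $B$-actions up to homotopy, which is weaker than an isomorphism in $\mrm{D}(\opn{Mod} B)$; the same remark applies to your fallback via the derived adjunction $\mrm{R}\opn{Hom}_{\widehat{A}}(B\otimes^{\mrm{L}}_A\widehat{A},-)\cong\mrm{R}\opn{Hom}_A(B,Q_A(-))$, which as usually stated is only $A$-linear. The gap can be closed while keeping your strategy, e.g.\ by upgrading $P$ to a DG-algebra resolution $A\to\widetilde{B}\to B$ as in Lemma \ref{lem:tensors}, so that both sides of the adjunction become DG $\widetilde{B}$-modules and one can descend along the quasi-isomorphism $\widetilde{B}\otimes_A\widehat{A}\to B$ (this is essentially the role Lemma \ref{lem:rhom-qis-dg} plays, which you gesture at in your last sentence), or by observing that both functors are right adjoint to the forgetful functor $\mrm{D}(\opn{Mod} B)\to\mrm{D}(\opn{Mod}\widehat{A})$, using that $X\otimes^{\mrm{L}}_A\widehat{A}\cong X\otimes^{\mrm{L}}_B(B\otimes^{\mrm{L}}_A\widehat{A})\cong X$ for $X\in\mrm{D}(\opn{Mod} B)$. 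As written, however, the proposal does not establish the $\mrm{D}(\opn{Mod} B)$-valued statement that the lemma claims and that the rest of the paper uses.
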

\begin{proof}
Let $A\to \widetilde{A} \cong \widehat{A}$ be a commutative semi-free DG-algebra resolution of $A\to \widehat{A}$, and let $Q_{\widetilde{A}}:\mrm{D}(\opn{Mod} \widehat{A}) \to \mrm{D}(\opn{DGMod} \widetilde{A})$ be the corresponding forgetful functor. Given $M \in \mrm{D}(\opn{Mod} \widehat{A})$, according to Lemma \ref{lem:rhom-qis-dg}, there is an isomorphism of functors
\[
\mrm{R}\opn{Hom}_{\widehat{A}}(B,M) \cong \mrm{R}\opn{Hom}_{\widetilde{A}}(B,Q_{\widetilde{A}}(M)).
\]
Let $Q_{\widetilde{A}}(M) \to I$ be a K-injective resolution of $Q_{\widetilde{A}}(M)$ over $\widetilde{A}$. Then there is an obvious $B$-linear isomorphism
\[
\mrm{R}\opn{Hom}_{\widetilde{A}}(B,Q_{\widetilde{A}}(M)) \cong \opn{Hom}_{\widetilde{A}}(B,I).
\]
According to  lemma \ref{lem:tor-calculation}, we have that $\opn{Tor}^A_i(B,\widehat{A}) = 0$ for all $i\ne 0$. Hence, the map $B \otimes_A \widetilde{A} \to B\otimes_A \widehat{A}$ induced by the map $\widetilde{A} \to \widehat{A}$ is a quasi-isomorphism.

Since ${\a}^m \subseteq J$, we have that $B \otimes_A A/{\a}^m \cong B$. On the other hand, since $\a$ is finitely generated, we have that $A/{\a}^m \otimes_A \widehat{A} \cong A/{\a}^m$. Combining these two facts, we deduce that $B\otimes_A \widehat{A} \cong B$. It follows that there is a quasi-isomorphism $B\otimes_A \widetilde{A} \to B$, which is $B$-linear on the left, and $\widetilde{A}$-linear on the right.\footnote{The main reason we needed the to take the DG-algebra resolution $A \to \widetilde{A} \cong \widehat{A}$ was in order to get these linearity conditions on this quasi-isomorphism. These allow us now to use the hom-tensor adjunction. The fact that there is such an $A$-linear isomorphism is already proved in Lemma \ref{lem:tor-calculation}, but this fact is not enough to use adjunction in the next step of the proof.}

This in turn induces a quasi-isomorphism
\[
\opn{Hom}_{\widetilde{A}}(B,I) \to \opn{Hom}_{\widetilde{A}}(B\otimes_A \widetilde{A},I),
\]
which by the hom-tensor adjunction is naturally isomorphic to
\[
\opn{Hom}_A(B,I).
\]

Since $A\to \widetilde{A}$ is flat, we deduce that $I$ is K-injective over $A$, so that $\opn{Hom}_A(B,I) \cong \mrm{R}\opn{Hom}_A(B,Q_A(M))$, which proves the claim.
\end{proof}

\newpage

\begin{lem}\label{lem:reduction-to-c-dual}
Let $A$ be a commutative ring, let $\a\subseteq A$ be a finitely generated weakly proregular ideal, and set $\widehat{A} :=\Lambda_{\a}(A)$. Let $J\subseteq A$ be an ideal, and assume that there are integers $m,n$, such that $\a^m \subseteq J \subseteq \a^n$. Set $B=A/J$. Then there are isomorphisms
\[
\mrm{R}\opn{Hom}_{\widehat{A}}(B,\mrm{R}\widehat{\Gamma}_{\a}(-)) \cong 
\mrm{R}\opn{Hom}_{\widehat{A}}(B,\mrm{L}\widehat{\Lambda}_{\a}(-)) \cong 
\mrm{R}\opn{Hom}_A(B,-)
\]
of functors $\mrm{D}(\opn{Mod} A) \to \mrm{D}(\opn{Mod}(B)$.
\end{lem}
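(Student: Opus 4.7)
The plan is to stitch together the two reduction ingredients already in place---Lemma \ref{lem:main-reduction} and Proposition \ref{prop:QLambda}---and then close with the standard adjunction properties between torsion and completion, using that $B$ is $\a$-torsion.

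First I would fix $M \in \mrm{D}(\opn{Mod} A)$ and apply Lemma \ref{lem:main-reduction} to the two $\widehat{A}$-complexes $\mrm{L}\widehat{\Lambda}_{\a}(M)$ and $\mrm{R}\widehat{\Gamma}_{\a}(M)$. This gives
\[
\mrm{R}\opn{Hom}_{\widehat{A}}(B, \mrm{L}\widehat{\Lambda}_{\a}(M)) \cong \mrm{R}\opn{Hom}_A(B, Q_A(\mrm{L}\widehat{\Lambda}_{\a}(M)))
\]
and the analogous statement with $\mrm{R}\widehat{\Gamma}_{\a}$. Proposition \ref{prop:QLambda} then identifies $Q_A\circ\mrm{L}\widehat{\Lambda}_{\a}\cong\mrm{L}\Lambda_{\a}$ and $Q_A\circ\mrm{R}\widehat{\Gamma}_{\a}\cong\mrm{R}\Gamma_{\a}$, so the problem reduces to proving the two $A$-linear isomorphisms
\[
\mrm{R}\opn{Hom}_A(B, \mrm{L}\Lambda_{\a}(M)) \cong \mrm{R}\opn{Hom}_A(B, M) \cong \mrm{R}\opn{Hom}_A(B, \mrm{R}\Gamma_{\a}(M)).
\]

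For these, the key observation is that because $\a^m \subseteq J$, the module $B = A/J$ is killed by $\a^m$, hence $B$ is $\a$-torsion, i.e.\ $B \in \mrm{D}(\opn{Mod} A)_{\a\opn{-tor}}$, and in particular $\mrm{R}\Gamma_{\a}(B) \cong B$. The rightmost isomorphism is then immediate from the adjunction $(\mrm{incl},\mrm{R}\Gamma_{\a})$: applying $\mrm{R}\opn{Hom}_A(B,-)$ to the counit $\mrm{R}\Gamma_{\a}(M)\to M$ gives an isomorphism since $B$ is torsion. For the leftmost one I would use the MGM equivalence of \cite{AJL1,PSY1,Sc}: since $\a$ is weakly proregular, $\mrm{L}\Lambda_{\a}$ and $\mrm{R}\Gamma_{\a}$ restrict to mutually quasi-inverse equivalences between $\mrm{D}(\opn{Mod} A)_{\a\opn{-tor}}$ and $\mrm{D}(\opn{Mod} A)_{\a\opn{-com}}$. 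Writing $B \cong \mrm{L}\Lambda_{\a}(\mrm{R}\Gamma_{\a}(B)) = \mrm{L}\Lambda_{\a}(B)$ inside the torsion category and then using the adjunction $(\mrm{L}\Lambda_{\a},\mrm{incl})$ against the complete complex $\mrm{L}\Lambda_{\a}(M)$ yields
\[
\mrm{R}\opn{Hom}_A(B, \mrm{L}\Lambda_{\a}(M)) \cong \mrm{R}\opn{Hom}_A(\mrm{L}\Lambda_{\a}(B), \mrm{L}\Lambda_{\a}(M)) \cong \mrm{R}\opn{Hom}_A(B, M).
\]
Chaining these isomorphisms with the output of Lemma \ref{lem:main-reduction} gives the stated zig-zag of natural isomorphisms over $B$.

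I expect no real obstacle here: the substantive work was already carried out in Lemma \ref{lem:main-reduction} (the delicate DG-resolution argument needed to upgrade $A$-linearity to $\widehat{A}$-linearity) and in the cited MGM results. The only point requiring a bit of care is naturality of the composite isomorphism in $M$, which follows because each of the three ingredients---Lemma \ref{lem:main-reduction}, Proposition \ref{prop:QLambda}, and the Greenlees-May adjunctions---is an isomorphism of functors, not merely of objects.
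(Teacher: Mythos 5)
Your proposal is correct and follows essentially the same route as the paper's proof: reduce via Lemma \ref{lem:main-reduction} and Proposition \ref{prop:QLambda}, note that $B$ is cohomologically $\a$-torsion (the paper extracts this from the telescope computation in the proof of Lemma \ref{lem:tor-calculation}, you from $\a^m B=0$ together with weak proregularity), and finish with Greenlees--May duality. The only point you treat more lightly is the $B$-linearity of the final GM-type isomorphisms, which the paper also only asserts is easy to verify, and which your formulation via applying $\mrm{R}\opn{Hom}_A(B,-)$ to the canonical morphisms $\mrm{R}\Gamma_{\a}(M)\to M\to \mrm{L}\Lambda_{\a}(M)$ essentially supplies.
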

\begin{proof}
Let $Q:\mrm{D}(\opn{Mod} \widehat{A}) \to \mrm{D}(\opn{Mod} A)$ be the forgetful functor.
According to Lemma \ref{lem:main-reduction}, there are $B$-linear isomorphisms of functors
\[
\mrm{R}\opn{Hom}_{\widehat{A}}(B,\mrm{R}\widehat{\Gamma}_{\a}(-)) \cong
\mrm{R}\opn{Hom}_{A}(B,Q(\mrm{R}\widehat{\Gamma}_{\a}(-)))
\]
and
\[
\mrm{R}\opn{Hom}_{\widehat{A}}(B,\mrm{L}\widehat{\Lambda}_{\a}(-)) \cong 
\mrm{R}\opn{Hom}_{A}(B,Q(\mrm{L}\widehat{\Lambda}_{\a}(-))) 
\]
By Proposition \ref{prop:QLambda}, these are isomorphic in $\mrm{D}(\opn{Mod} B)$ to
\[
\mrm{R}\opn{Hom}_{A}(B,\mrm{R}\Gamma_{\a}(-) )
\]
and
\[
\mrm{R}\opn{Hom}_{A}(B,\mrm{L}\Lambda_{\a}(-) )
\]
respectively. In the proof of Lemma \ref{lem:tor-calculation}, we have seen that $B \cong \opn{Tel}(A;\mathbf{a}) \otimes_A B$, which implies that $B$ is cohomologically $\a$-torsion. Hence, by the Greenlees-May duality (\cite[Theorem 0.3]{AJL1}, or \cite[Theorem 7.12]{PSY1}, there are natural isomorphisms
\[
\mrm{R}\opn{Hom}_{A}(B,\mrm{R}\Gamma_{\a}(-) ) \cong \mrm{R}\opn{Hom}_{A}(B,- ) \cong 
\mrm{R}\opn{Hom}_{A}(B,\mrm{L}\Lambda_{\a}(-) ).
\]
The isomorphisms constructed in \cite{AJL1,PSY1} are $A$-linear, but it is easy to verify that in our situation they are actually $B$-linear. This proves the claim.
\end{proof}

\subsection{The box tensor products over affine formal schemes}
 
 Next, we obtain some general finiteness results about the adic box tensor products. For a moment, we drop the assumption that $\k$ is a field, as it does not produce additional difficulties, and it seems that this result might be of independent interest in this greater generality.

\begin{prop}\label{prop:adic-box}
Let $\k$ be a commutative ring, and let $(A,\a)$ and $(B,\b)$ be two noetherian adic rings which are flat $\k$-algebras. Let $I = \a\otimes_{\k} B + A\otimes_{\k} \b$ be the ideal of definition of the adic topology on $A\otimes_{\k} B$, let $\widehat{I}$ be the ideal generated by its image in $\widehat{A\otimes_{\k} B}$, and assume that $I$ is weakly proregular (if $\k$ is a field this always holds), and that $\widehat{A\otimes_{\k} B}$ is noetherian.
Given $M \in \mrm{D}^{\mrm{b}}_{\mrm{f}}(\opn{Mod} A)$ and $N \in \mrm{D}^{\mrm{b}}_{\mrm{f}}(\opn{Mod} B)$ with $M$ having finite flat dimension over $\k$, we have that
\[
\mrm{L}\widehat{\Lambda}_{I} (M\otimes^{\mrm{L}}_{\k} N)  \in \mrm{D}^{\mrm{b}}_{\mrm{f}}(\opn{Mod} \widehat{A\otimes_{\k} B}),
\] 
and
\[
\mrm{R}\widehat{\Gamma}_{I} (M\otimes^{\mrm{L}}_{\k} N)  \in \mrm{D}^{\mrm{b}}(\opn{Mod} \widehat{A\otimes_{\k} B})_{\widehat{I}-\opn{cof}}.
\] 
\end{prop}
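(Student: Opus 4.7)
I would first reduce to the module case. Since $A$ is $\k$-flat and noetherian and $M$ has finite flat dimension over $\k$, take a bounded-above resolution of $M$ by finitely generated free $A$-modules (which are automatically $\k$-flat) and truncate at the $\k$-flat dimension; the kernel there remains finitely generated (since $A$ is noetherian) and $\k$-flat (by the dimension count), giving a bounded complex $P^\bullet$ of finitely generated $\k$-flat $A$-modules quasi-isomorphic to $M$. Represent $N$ by a bounded complex $Q^\bullet$ of finitely generated $B$-modules. Then $M\otimes^{\mrm{L}}_\k N\simeq P^\bullet\otimes_\k Q^\bullet$, and brutal-truncation devissage along $P^\bullet$ and $Q^\bullet$ reduces both claims to the case where $M$ is a single finitely generated $\k$-flat $A$-module and $N$ is a single finitely generated $B$-module, so $M\otimes^{\mrm{L}}_\k N = M\otimes_\k N$.

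Writing $\mathbf{a}$, $\mathbf{b}$ for generating sequences of $\a$, $\b$, the telescope complex factors as
\[
\opn{Tel}(A\otimes_\k B;\mathbf{a}\otimes 1,1\otimes \mathbf{b}) \cong \opn{Tel}(A;\mathbf{a})\otimes_\k \opn{Tel}(B;\mathbf{b}),
\]
and combined with weak proregularity of $I$, $\a$, $\b$ and the K-flatness of the telescopes this yields
\[
\mrm{R}\widehat{\Gamma}_I(M\otimes_\k N) \cong \mrm{R}\Gamma_\a(M)\otimes^{\mrm{L}}_\k \mrm{R}\Gamma_\b(N),
\]
a bounded, cohomologically $I$-torsion complex. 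By the MGM equivalence, a cohomologically $I$-torsion complex over $\widehat{A\otimes_\k B}$ lies in $\mrm{D}^{\mrm{b}}(\opn{Mod}\widehat{A\otimes_\k B})_{\widehat{I}-\opn{cof}}$ if and only if its image under $\mrm{L}\widehat{\Lambda}_I$ lies in $\mrm{D}^{\mrm{b}}_{\mrm{f}}(\opn{Mod}\widehat{A\otimes_\k B})$; since $\mrm{L}\widehat{\Lambda}_I\circ\mrm{R}\widehat{\Gamma}_I\cong \mrm{L}\widehat{\Lambda}_I$, the two conclusions of the proposition collapse to the single statement that $\mrm{L}\widehat{\Lambda}_I(M\otimes_\k N)\in \mrm{D}^{\mrm{b}}_{\mrm{f}}(\opn{Mod}\widehat{A\otimes_\k B})$.

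To establish this, I would use the finiteness criterion that a cohomologically $\widehat{I}$-complete complex $Y$ over the noetherian ring $\widehat R := \widehat{A\otimes_\k B}$ lies in $\mrm{D}^{\mrm{b}}_{\mrm{f}}$ if and only if $Y\otimes^{\mrm{L}}_{\widehat R}\widehat R/\widehat I$ lies in $\mrm{D}^{\mrm{b}}_{\mrm{f}}$ over the noetherian quotient $\widehat R/\widehat I\cong A/\a\otimes_\k B/\b$ (noetherian as a quotient of $\widehat R$). The projection formula — the fiber $C$ of $X\to \mrm{L}\Lambda_I(X)$ satisfies $\mrm{R}\Gamma_I(C)=0$, hence $R/I\otimes^{\mrm{L}}_R C\cong R/I\otimes^{\mrm{L}}_R \mrm{R}\Gamma_I(C)=0$ by the projection formula for $\mrm{R}\Gamma_I$ — identifies this reduction with $(M\otimes_\k N)\otimes^{\mrm{L}}_{A\otimes_\k B}(A/\a\otimes_\k B/\b)$, which via a variant of \cite[Lemma 8.4]{YZ} decomposes as an external $\k$-derived tensor product of $M\otimes^{\mrm{L}}_A A/\a$ and $N\otimes^{\mrm{L}}_B B/\b$. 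The finite $\k$-flat dimension of $M$ is exactly what makes a Künneth spectral sequence converge with bounded cohomology, finitely generated over the noetherian ring $A/\a\otimes_\k B/\b$. I expect this last step to be the main technical obstacle: since $M$ has only finite $\k$-flat dimension and not finite $A$-flat dimension, the factor $M\otimes^{\mrm{L}}_A A/\a$ can itself be unbounded, so one must run the Künneth argument carefully — possibly replacing $A/\a$, $B/\b$ by the Koszul complexes on $\mathbf{a}$, $\mathbf{b}$ as bounded substitutes — so that the $\k$-flat dimension of $M$, combined with the noetherianness of $A/\a\otimes_\k B/\b$ inherited from $\widehat R$, controls both convergence and finite generation.
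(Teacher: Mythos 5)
There is a genuine gap, and it sits exactly at the difficulty the paper is built to avoid. Your plan to ``collapse'' the cofiniteness claim into the finiteness claim rests on the identity $\mrm{L}\widehat{\Lambda}_I\circ\mrm{R}\widehat{\Gamma}_I\cong\mrm{L}\widehat{\Lambda}_I$. As written this is not even well-typed: $\mrm{R}\widehat{\Gamma}_I$ lands in $\mrm{D}(\opn{Mod}\widehat{A\otimes_\k B})$, while the source of $\mrm{L}\widehat{\Lambda}_I$ is $\mrm{D}(\opn{Mod} A\otimes_\k B)$. What your MGM criterion over $\widehat{R}:=\widehat{A\otimes_\k B}$ actually requires is a cross-ring compatibility such as $\mrm{L}\Lambda_{\widehat{I}}\bigl(\mrm{R}\widehat{\Gamma}_I(-)\bigr)\cong\mrm{L}\widehat{\Lambda}_I(-)$, equivalently $\mrm{R}\Gamma_{\widehat{I}}\circ\mrm{L}\widehat{\Lambda}_I\cong\mrm{R}\widehat{\Gamma}_I$, relating torsion/completion computed over $A\otimes_\k B$ with the same functors computed over $\widehat{R}$. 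Since $A\otimes_\k B$ need not be noetherian, the completion map $A\otimes_\k B\to\widehat{R}$ is not known to be flat, and the paper's remark immediately after this proposition states explicitly that precisely this identity is not known in that generality; this is why the author instead invokes \cite[Theorem 3.10]{PSY2} together with Lemma \ref{lem:reduction-to-c-dual} (whose proof needs the DG-algebra resolution of Lemma \ref{lem:main-reduction} for the same non-flatness reason). Your argument silently assumes the compatibilities that flatness of the completion map would provide, so the second claim is not proved.

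The same issue infects your proof of the first claim: the identification of $\mrm{L}\widehat{\Lambda}_I(M\otimes^{\mrm{L}}_\k N)\otimes^{\mrm{L}}_{\widehat{R}}\widehat{R}/\widehat{I}$ with $(M\otimes^{\mrm{L}}_\k N)\otimes^{\mrm{L}}_{A\otimes_\k B}(A/\a\otimes_\k B/\b)$ does not follow from the projection formula over $A\otimes_\k B$ alone; one must also move the reduction across the ring change, which needs $\opn{Tor}^{A\otimes_\k B}_i(A/\a\otimes_\k B/\b,\widehat{R})=0$ for $i\neq 0$ (this is the content of Lemma \ref{lem:tor-calculation}, which you neither prove nor cite), and the derived-Nakayama finiteness criterion you invoke (cohomologically complete plus finite reduction implies $\mrm{D}^{\mrm{b}}_{\mrm{f}}$) is itself a nontrivial statement outside the paper's toolkit, with the boundedness part being delicate. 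Note finally that for the first claim the paper's argument is much shorter and avoids all of this: since $A$ is $\k$-flat, $M\otimes^{\mrm{L}}_\k N$ is represented by a bounded-above complex of finitely generated free $A\otimes_\k B$-modules, whose termwise $I$-adic completion computes $\mrm{L}\widehat{\Lambda}_I$ and is a bounded-above complex of finitely generated free $\widehat{R}$-modules; boundedness follows from the finite cohomological dimension of $\mrm{L}\Lambda_I$ and $\mrm{R}\Gamma_I$ for weakly proregular $I$. Your telescope factorization $\opn{Tel}(A\otimes_\k B;\mathbf{a},\mathbf{b})\cong\opn{Tel}(A;\mathbf{a})\otimes_\k\opn{Tel}(B;\mathbf{b})$ and the resulting K\"unneth description of $\mrm{R}\Gamma_I$ are correct and pleasant, but they only control the functors over $A\otimes_\k B$, not over $\widehat{R}$, which is where the conclusion lives.
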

\begin{proof}
We first show that both of these complexes have bounded cohomology. Let $Q:\mrm{D}(\opn{Mod} \widehat{A\otimes_{\k} B}) \to \mrm{D}(\opn{Mod} A\otimes_{\k} B)$ be the forgetful functor. Clearly, a complex $X$ has bounded cohomology if and only if the complex $Q(X)$ has bounded cohomology. In view of Proposition \ref{prop:QLambda}, it is enough to show that the complexes
\[
\mrm{L}\Lambda_{I} (M\otimes^{\mrm{L}}_{\k} N)
\]
and
\[
\mrm{R}\Gamma_{I} (M\otimes^{\mrm{L}}_{\k} N)
\]
have bounded cohomology, but this follows immediately from the flat dimension assumption on $M$, combined with the fact that when $I$ is weakly proregular, the functors $\mrm{L}\Lambda_I$ and $\mrm{R}\Gamma_I$ have finite cohomological dimension (for example, by \cite[Corollary 4.28]{PSY1} and \cite[Corollary 5.27]{PSY1}).

Next, we show the claims about finiteness of the cohomologies. Let $P\to M$ and $Q\to N$ be bounded above resolutions made of finitely generated free modules. As $A$ is flat over $\k$, $P$ is also flat over $\k$, so that $M\otimes^{\mrm{L}}_{\k} N \cong P\otimes_{\k} Q$, and the latter is also a bounded above complex made of finitely generated free modules, so that 
\[
\mrm{L} \widehat{\Lambda}_I (M\otimes^{\mrm{L}}_{\k} N ) \cong \Lambda_I(P\otimes_{\k} Q).
\]
Since the completion functor commutes with finite direct sums, it follows that $\Lambda_I(P\otimes_{\k} Q)$ is also a bounded above complex made of finitely generated free modules, which shows that the cohomologies of this complex are finitely generated over $\widehat{A\otimes_{\k} B}$. It remains to show that
\[
\mrm{R}\widehat{\Gamma}_{I} (M\otimes^{\mrm{L}}_{\k} N)  \in \mrm{D}^{\mrm{b}}(\opn{Mod} \widehat{A\otimes_{\k} B})_{\widehat{I}-\opn{cof}}.
\]
As we already established that this complex is bounded, and as it is clearly cohomologically $\widehat{I}$-torsion, by \cite[Theorem 3.10]{PSY2}, it is enough to show that the complex
\[
\mrm{R}\opn{Hom}_{\widehat{A\otimes_{\k} B}}(\widehat{A\otimes_{\k} B}/\widehat{I} , \mrm{R}\widehat{\Gamma}_{I} (M\otimes^{\mrm{L}}_{\k} N) )
\]
has finitely generated cohomologies. By Lemma \ref{lem:reduction-to-c-dual}, there is an isomorphism
\[
\mrm{R}\opn{Hom}_{\widehat{A\otimes_{\k} B}}(\widehat{A\otimes_{\k} B}/\widehat{I} , \mrm{R}\widehat{\Gamma}_{I} (M\otimes^{\mrm{L}}_{\k} N) ) \cong 
\mrm{R}\opn{Hom}_{\widehat{A\otimes_{\k} B}}(\widehat{A\otimes_{\k} B}/\widehat{I} , \mrm{L}\widehat{\Lambda}_{I} (M\otimes^{\mrm{L}}_{\k} N) ),
\]
so the result follows from the first claim in this proposition.
\end{proof}

\begin{rem}
One might wonder why in the above proof we had to invoke the rather difficult theorem of \cite{PSY2}, instead of deducing the finiteness condition in the torsion case directly from the identity $\mrm{R}\widehat{\Gamma}_I (-) \cong \mrm{R}\Gamma_{\widehat{I}} \circ \mrm{L}\widehat{\Lambda}_I (-) $. The reason for that is that we do not know if this identity holds when $A\otimes_{\k} B \to \widehat{A\otimes_{\k} B}$ is not flat.
\end{rem}

\subsection{Tensor product of dualizing complexes over formal schemes}

In this subsection we will prove Theorem \ref{thm:tensor-dual-adic}, the main result of this section. First, we recall the definitions of dualizing complexes over affine formal schemes. See \cite[Section 2.5]{AJL2} and \cite[Section 5]{Ye1} for details (keeping in mind \cite[Theorem 3.10]{PSY2}). Let $(A,\a)$ be an adic noetherian ring. A complex $R \in \mrm{D}(\opn{Mod} A)$ which has finite injective dimension over $A$, and such that the canonical map $A \to \mrm{R}\opn{Hom}_A(R,R)$ is an isomorphism is called a c-dualizing complex if $R \in \mrm{D}^{\mrm{b}}_{\mrm{f}}(\opn{Mod} A)$, and is called a t-dualizing complex if $R \in \mrm{D}^{\mrm{b}}(\opn{Mod} A)_{\a-\opn{cof}}$.

The next lemma allows us to reduce the problem of determining if a complex over the completed tensor product is dualizing to a problem over discrete rings. We will then use Theorem \ref{thm:tensor-dual} to obtain the required result.

\begin{lem}\label{lem:reduction-to-discrete}
Let $\k$ be a field, and let $(A,\a)$ and $(B,\b)$ be two noetherian adic rings which are $\k$-algebras, such that the completed tensor product $\widehat{A\otimes_{\k} B}$ is noetherian of finite Krull dimension. Let $I$ be the ideal of definition of the adic topology on $\widehat{A\otimes_{\k} B}$, and let $M \in \mrm{D}^{\mrm{b}}_{\mrm{f}}(\opn{Mod} \widehat{A\otimes_{\k} B})$ (respectively $M \in \mrm{D}(\opn{Mod} \widehat{A\otimes_{\k} B})_{\opn{I-cof}})$. Then $M$ is a c-dualizing (resp. t-dualizing) complex over $\widehat{A\otimes_{\k} B}$ if and only if for each $n>0$ the complex
\[
\mrm{R}\opn{Hom}_{\widehat{A\otimes_{\k} B}}(A/{\a}^n \otimes_{\k} B/{\b}^n, M) \in \mrm{D}(\opn{Mod} A/{\a}^n \otimes_{\k} B/{\b}^n)
\]
is a dualizing complex over $A/{\a}^n \otimes_{\k} B/{\b}^n$.
\end{lem}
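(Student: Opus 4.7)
The plan is to reduce the t-dualizing case to the c-dualizing case and then handle the c-dualizing case directly, splitting its reverse implication into verifying biduality and finite injective dimension.

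For the reduction from t-dualizing to c-dualizing, given $M \in \mrm{D}(\opn{Mod} C)_{I\opn{-cof}}$ (where $C := \widehat{A\otimes_{\k} B}$), I would set $\widetilde{M} := \mrm{L}\widehat{\Lambda}_I M$. By the Matlis--Greenlees--May equivalence, $\widetilde{M} \in \mrm{D}^{\mrm{b}}_{\mrm{f}}(\opn{Mod} C)$ and $\mrm{R}\widehat{\Gamma}_I \widetilde{M} \cong M$. Lemma \ref{lem:reduction-to-c-dual}, applied to the (already complete) adic ring $C$ and the ideals $J_n C$ satisfying $I^{2n} \subseteq J_n C \subseteq I^n$ where $J_n := \a^n \otimes_{\k} B + A \otimes_{\k} \b^n$, gives $\mrm{R}\opn{Hom}_C(C_n, \widetilde{M}) \cong \mrm{R}\opn{Hom}_C(C_n, M)$, so the hypothesis on the $K_n := \mrm{R}\opn{Hom}_C(C_n,M)$ transfers between $M$ and $\widetilde{M}$. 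Together with the standard MGM correspondence between c-dualizing and t-dualizing complexes ($\widetilde M$ c-dualizing iff $\mrm{R}\Gamma_I \widetilde M = M$ t-dualizing), this reduces the t-dualizing equivalence to the c-dualizing one. The forward direction of the c-dualizing case is immediate from \cite[Proposition V.2.4]{RD}, since $C \surj C_n$ is a finite ring map.

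For the reverse direction of the c-dualizing case, the main computation I plan to carry out for biduality is
\[
\mrm{R}\opn{Hom}_C(M, M) \cong \mrm{R}\opn{Hom}_C(\mrm{R}\Gamma_I M, M) \cong \operatorname{holim}_n \mrm{R}\opn{Hom}_C(K_n, M) \cong \operatorname{holim}_n C_n \cong C,
\]
using, in order: (1) the MGM adjunction plus the fact that $M \in \mrm{D}^{\mrm{b}}_{\mrm{f}}(\opn{Mod} C)$ over the complete ring $C$ is cohomologically $I$-complete, so $\mrm{L}\Lambda_I M \cong M$; (2) the identification $\mrm{R}\Gamma_I M \cong \operatorname{hocolim}_n K_n$ via cofinality of $\{I^n\}$ and $\{J_n C\}$, plus the fact that $\mrm{R}\opn{Hom}_C(-, M)$ converts $\operatorname{hocolim}$ in the first argument to $\operatorname{holim}$; (3) the biduality for the dualizing complex $K_n$ over $C_n$ giving $\mrm{R}\opn{Hom}_{C_n}(K_n, K_n) \cong C_n$, combined with the adjunction $\mrm{R}\opn{Hom}_{C_n}(K_n, K_n) \cong \mrm{R}\opn{Hom}_C(K_n, M)$; and (4) surjectivity (hence the Mittag-Leffler property) of the inverse system $\{C_n\}$, so $\operatorname{holim}_n C_n \cong \varprojlim_n C_n = C$. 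An application of Foxby's result \cite[Proposition 2.3]{AIL} (as used in Lemma \ref{lem:canonical-map}) then promotes this abstract isomorphism to the canonical map being an isomorphism.

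To show $M$ has finite injective dimension over $C$, I would use that each $K_n$ has amplitude at most $\opn{amp}(M)$ and each $C_n$ has Krull dimension at most $\dim C < \infty$, yielding a uniform bound $d_0$ on the injective dimension of $K_n$ over $C_n$. For any finitely generated $N \in \opn{Mod} C$, the MGM adjunction gives $\mrm{R}\opn{Hom}_C(N, M) \cong \mrm{R}\opn{Hom}_C(\mrm{R}\Gamma_I N, M)$, and expressing $\mrm{R}\Gamma_I N$ as a hocolim of bounded complexes over the various $C_n$ and applying the adjunction $\mrm{R}\opn{Hom}_C(-, M) \cong \mrm{R}\opn{Hom}_{C_n}(-, K_n)$ piecewise (with uniformly bounded output) would show $\mrm{R}\opn{Hom}_C(N, M)$ has uniformly bounded amplitude, hence finite injective dimension for $M$. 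The hardest part will be this finite-injective-dimension step, requiring careful amplitude bookkeeping through the nested hocolim/holim and MGM operations; by contrast, biduality falls out rather cleanly from cofinality, the MGM adjunction, and Foxby's lemma.
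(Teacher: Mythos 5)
Your proposal takes a genuinely different route from the paper: the paper merely checks that the ideals $J_n=\ker\bigl(\widehat{A\otimes_{\k} B}\to A/\a^n\otimes_{\k} B/\b^n\bigr)$ are cofinal with the powers $I^n$ and then quotes the proof of \cite[Lemma 2.5.10]{AJL2} (cf.\ \cite[Satz 2]{Fa}), whereas you are re-proving that criterion from scratch. Your t-to-c reduction and your biduality computation are essentially sound (one must check that the isomorphisms $C_n\to \mrm{R}\opn{Hom}_C(K_n,M)$ are compatible with the transition maps so that the homotopy limit is $\varprojlim C_n=C$, but that is routine naturality), and Foxby's lemma is used exactly as in Lemma \ref{lem:canonical-map}. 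The problem is the finite-injective-dimension step, and there the gap is genuine, in two places. First, the uniform bound on $\opn{injdim}_{C_n}K_n$ is extracted from the claim that $\opn{amp}(K_n)\le\opn{amp}(M)$. This is unjustified and in fact false: take $A=\k[[x,y]]\times\k[[z]]$ with $\a$ its radical, $B=\k$, $M=C=A$; then $M$ is c-dualizing with $\opn{amp}(M)=0$, while $K_n=\mrm{R}\opn{Hom}_C(C/\a^n,C)$ has cohomology in degrees $1$ and $2$, so $\opn{amp}(K_n)=1$. More importantly, amplitude is not the relevant quantity: for the bound $\opn{injdim}_{C_n}K_n\le \sup K_n+\dim C_n$ you need a uniform bound on the absolute degree range of $K_n$, i.e.\ on $\sup K_n$, and this does not follow from any comparison of amplitudes with $M$; it is true, but requires an argument, e.g.\ filtering $C_n$ by powers of $J_1$ so that the subquotients are $C_1$-modules and using $\opn{injdim}_{C_1}K_1<\infty$ via the adjunction $\mrm{R}\opn{Hom}_C(-,M)\cong\mrm{R}\opn{Hom}_{C_1}(-,K_1)$.

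Second, the plan to write $\mrm{R}\Gamma_I N$, for $N$ a finitely generated $C$-module, as a homotopy colimit of \emph{bounded} complexes of $C_n$-modules and apply the adjunction termwise does not work as stated. The natural pieces $\mrm{R}\opn{Hom}_C(C_n,N)$ are in general unbounded (already for $N=C$ with $C$ non-Gorenstein one has $\opn{Ext}^i_C(C_1,C)\ne 0$ for all large $i$), while the pieces that are bounded and do compute $\mrm{R}\Gamma_I N$ as a homotopy colimit --- Koszul (or telescope) complexes on powers of a generating sequence of $I$, tensored with $N$ --- are complexes of free $C$-modules, not of $C_n$-modules, so $\mrm{R}\opn{Hom}_C(-,M)\cong\mrm{R}\opn{Hom}_{C_n}(-,K_n)$ cannot be applied to them directly. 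One needs an extra d\'evissage or spectral-sequence step on their cohomologies, which are finitely generated and killed by powers of $I$, again reducing to $C_1$ and $K_1$. With these two repairs your argument does go through and amounts to an honest proof of the criterion that the paper imports from \cite{AJL2,Fa}; but as written, the key uniform bounds --- which are precisely the hard content the paper outsources --- are not established.
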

\begin{proof}
Consider the sequence of ideals $J_n = \ker(\widehat{A\otimes_{\k} B} \to A/{\a}^n \otimes_{\k} B/{\b}^n)$.  For every $n \in \mathbb{N}$, there is some $m \in \mathbb{N}$, such that $J_n \subseteq I^m$, and likewise, for every $n\in \mathbb{N}$, there is some $m\in \mathbb{N}$, such that $I^n \subseteq J_m$. Hence, 
\[
\varprojlim (\widehat{A\otimes_{\k} B}/J_n) \cong \widehat{A\otimes_{\k} B},
\]
and moreover, the two functors $\Gamma_I(-)$ and $\varinjlim \opn{Hom}_{\widehat{A\otimes_{\k} B}}( \widehat{A\otimes_{\k} B}/J_n,-)$ are canonically isomorphic.
With these observations, the result now follows from the proof of \cite[Lemma 2.5.10]{AJL2} (see also \cite[Satz 2]{Fa}).
\end{proof}

We now arrive to the main result of this section, an adic generalization of Theorem \ref{thm:tensor-dual}.
\begin{thm}\label{thm:tensor-dual-adic}
Let $\k$ be a field, and let $(A,\a)$ and $(B,\b)$ be two noetherian adic rings which are $\k$-algebras. Let $I$ be the ideal of definition of the adic topology on $A\otimes_{\k} B$.
Let $R_A$ be a c-dualizing complex over $(A,\a)$, and let $R_B$ be a c-dualizing complex over $(B,\b)$. Then the ring $\widehat{A\otimes_{\k} B}$ has dualizing complexes if and only if it is noetherian of finite Krull dimension. In that case, 
$\mrm{L}\widehat{\Lambda}_I(R_A\otimes_{\k} R_B)$ is a c-dualizing complex over $\widehat{A\otimes_{\k} B}$, and $\mrm{R}\widehat{\Gamma}_I(R_A\otimes_{\k} R_B)$ is a t-dualizing complex over $\widehat{A\otimes_{\k} B}$.
\end{thm}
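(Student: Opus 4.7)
The plan is to reduce the statement to the discrete case handled in Theorem \ref{thm:tensor-dual} via the two reduction lemmas of this section. The only-if direction is standard, since any ring admitting a c-dualizing or t-dualizing complex is necessarily noetherian of finite Krull dimension. So assume that $\widehat{A\otimes_{\k} B}$ is noetherian of finite Krull dimension. Because $\k$ is a field, both $A$ and $B$ are flat over $\k$ and the ideal $I$ is automatically weakly proregular, so Proposition \ref{prop:adic-box} furnishes the required finiteness properties: $\mrm{L}\widehat{\Lambda}_I(R_A\otimes_{\k} R_B) \in \mrm{D}^{\mrm{b}}_{\mrm{f}}(\opn{Mod} \widehat{A\otimes_{\k} B})$ and $\mrm{R}\widehat{\Gamma}_I(R_A\otimes_{\k} R_B) \in \mrm{D}^{\mrm{b}}(\opn{Mod} \widehat{A\otimes_{\k} B})_{\widehat{I}-\opn{cof}}$.

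What remains is to check finite injective dimension and Hom-reflexivity. By Lemma \ref{lem:reduction-to-discrete}, both reduce to showing that for every $n>0$ the complex
\[
\mrm{R}\opn{Hom}_{\widehat{A\otimes_{\k} B}}(A/\a^n \otimes_{\k} B/\b^n, M)
\]
is a dualizing complex over $A/\a^n \otimes_{\k} B/\b^n$, where $M$ stands for either $\mrm{L}\widehat{\Lambda}_I(R_A\otimes_{\k} R_B)$ or $\mrm{R}\widehat{\Gamma}_I(R_A\otimes_{\k} R_B)$. Applying Lemma \ref{lem:reduction-to-c-dual} with underlying ring $A\otimes_{\k} B$, ideal $I$, and the kernel $J$ of the projection onto $A/\a^n\otimes_{\k} B/\b^n$ (which sits between appropriate powers of $I$), both complexes become canonically isomorphic in $\mrm{D}(\opn{Mod} A/\a^n \otimes_{\k} B/\b^n)$ to the single complex
\[
\mrm{R}\opn{Hom}_{A\otimes_{\k} B}(A/\a^n \otimes_{\k} B/\b^n, R_A\otimes_{\k} R_B).
\]

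The next step, and the main obstacle, is to identify this complex with the correct linearity. I would mimic the DG-algebra resolution argument of Lemma \ref{lem:tensors}: take commutative semi-free resolutions $A\to \wtil{A} \iso A/\a^n$ and $B\to \wtil{B} \iso B/\b^n$ with $\wtil{A}^0=A$, $\wtil{B}^0=B$, and finitely generated projective components in each negative degree. Then $\wtil{A}\otimes_{\k}\wtil{B}$ is a K-projective DG-algebra resolution of $A\otimes_{\k} B \to A/\a^n \otimes_{\k} B/\b^n$, and \cite[Lemma 8.4]{YZ} yields an isomorphism in $\mrm{D}(\opn{DGMod} \wtil{A}\otimes_{\k}\wtil{B})$ between the displayed complex and $\mrm{R}\opn{Hom}_A(A/\a^n, R_A)\otimes_{\k} \mrm{R}\opn{Hom}_B(B/\b^n, R_B)$. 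The key point is the same linearity upgrade exploited in Lemma \ref{lem:tensors}: the naively $A\otimes_{\k} B$-linear isomorphism becomes $A/\a^n\otimes_{\k} B/\b^n$-linear after passing through the DG-resolutions.

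To conclude, c-dualizing complexes are by definition classical dualizing complexes, and the finite maps $A\surj A/\a^n$, $B\surj B/\b^n$ preserve this property by \cite[Proposition V.2.4]{RD}; hence $\mrm{R}\opn{Hom}_A(A/\a^n, R_A)$ and $\mrm{R}\opn{Hom}_B(B/\b^n, R_B)$ are dualizing over $A/\a^n$ and $B/\b^n$ respectively. Since $A/\a^n \otimes_{\k} B/\b^n$ is a quotient of $\widehat{A\otimes_{\k} B}$ it is noetherian of finite Krull dimension, so Theorem \ref{thm:tensor-dual} implies that their tensor product is dualizing over $A/\a^n \otimes_{\k} B/\b^n$. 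Combined with the identifications above, this and Lemma \ref{lem:reduction-to-discrete} complete the proof.
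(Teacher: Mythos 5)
Your proposal is correct and follows essentially the same route as the paper's own proof: Proposition \ref{prop:adic-box} for the finiteness/cofiniteness claims, Lemma \ref{lem:reduction-to-discrete} to reduce to the discrete quotients $A/\a^n\otimes_{\k} B/\b^n$, Lemma \ref{lem:reduction-to-c-dual} to identify both RHom complexes with $\mrm{R}\opn{Hom}_{A\otimes_{\k} B}(A/\a^n\otimes_{\k} B/\b^n, R_A\otimes_{\k} R_B)$, and the DG-resolution linearity trick of Lemma \ref{lem:tensors} combined with Theorem \ref{thm:tensor-dual} to conclude. No gaps to report.
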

\begin{proof}
Let $\widehat{I}$ be the ideal generated by the image of $I$ in $\widehat{A\otimes_{\k} B}$. According to \cite[Example 4.35]{PSY1}, the ideal $I$ is weakly proregular. Hence, by Proposition \ref{prop:adic-box}, we have that
\[
\mrm{L}\widehat{\Lambda}_I(R_A\otimes_{\k} R_B)  \in \mrm{D}^{\mrm{b}}_{\mrm{f}}(\opn{Mod} \widehat{A\otimes_{\k} B}),
\]
and
\[
\mrm{R}\widehat{\Gamma}_I(R_A\otimes_{\k} R_B) \in \mrm{D}^{\mrm{b}}(\opn{Mod} \widehat{A\otimes_{\k} B})_{\widehat{I}-\opn{cof}}
\]
By Lemma \ref{lem:reduction-to-discrete}, it is enough to show that for all $n$, the complexes
\[
\mrm{R}\opn{Hom}_{\widehat{A\otimes_{\k} B}}(A/{\a}^n \otimes_{\k} B/{\b}^n,\mrm{L}\widehat{\Lambda}_I(R_A\otimes_{\k} R_B))
\]
and
\[
\mrm{R}\opn{Hom}_{\widehat{A\otimes_{\k} B}}(A/{\a}^n \otimes_{\k} B/{\b}^n,\mrm{R}\widehat{\Gamma}_I(R_A\otimes_{\k} R_B))
\]
are dualizing complexes over $A/{\a}^n \otimes_{\k} B/{\b}^n$. 
By Lemma \ref{lem:reduction-to-c-dual}, both of these complexes are isomorphic as objects in  $\mrm{D}(\opn{Mod} A/{\a}^n \otimes_{\k} B/{\b}^n)$, and moreover, both of them are isomorphic to the complex
\[
\mrm{R}\opn{Hom}_{A\otimes_{\k} B}(A/{\a}^n \otimes_{\k} B/{\b}^n,R_A\otimes_{\k} R_B).
\]
Note that as the maps $A\to A/{\a}^n$ and $B \to B/{\b}^n$ are finite, the complexes
\[
\mrm{R}\opn{Hom}_A(A/{\a}^n,R_A)
\]
and 
\[
\mrm{R}\opn{Hom}_B(B/{\b}^n,R_B)
\]
are dualizing complexes over $A/{\a}^n$ and $B/{\b}^n$ respectively. Since the ring $A/{\a}^n \otimes_{\k} B/{\b}^n$ is noetherian of finite Krull dimension (being a quotient of the noetherian ring of finite Krull dimension $\widehat{A\otimes_{\k} B}$), it follows from Theorem \ref{thm:tensor-dual} that
\[
\mrm{R}\opn{Hom}_A(A/{\a}^n,R_A) \otimes_{\k} \mrm{R}\opn{Hom}_B(B/{\b}^n,R_B)
\]
is a dualizing complex over $A/{\a}^n \otimes_{\k} B/{\b}^n$. We now use the same trick as in the proof of Lemma  \ref{lem:tensors}. Thus, let $A \to \widetilde{A} \cong A/{\a}^n$ and $B \to \widetilde{B} \cong B/{\b}^n$ be DG-algebra resolutions of $A\to A/{\a}^n$ and $B\to B/{\b}^n$ respectively as in Lemma \ref{lem:tensors}. Then there is a $\widetilde{A}\otimes_{\k} \widetilde{B}$-linear isomorphism
\begin{eqnarray}
\nonumber
\mrm{R}\opn{Hom}_{A\otimes_{\k} B}(A/{\a}^n \otimes_{\k} B/{\b}^n,R_A\otimes_{\k} R_B) \cong\\
\nonumber
\mrm{R}\opn{Hom}_A(A/{\a}^n,R_A) \otimes_{\k} \mrm{R}\opn{Hom}_B(B/{\b}^n,R_B).
\end{eqnarray}
The right hand side is a dualizing complex over $A/{\a}^n \otimes_{\k} B/{\b}^n$, and hence, also a dualizing DG-module over $\widetilde{A}\otimes_{\k} \widetilde{B}$. Hence, the left hand side, which is a priori a complex over  $A/{\a}^n \otimes_{\k} B/{\b}^n$ is also a dualizing DG-module over $\widetilde{A}\otimes_{\k} \widetilde{B}$. Hence, by the argument used in the proof of Lemma \ref{lem:tensors}, we deduce that 
\[
\mrm{R}\opn{Hom}_{A\otimes_{\k} B}(A/{\a}^n \otimes_{\k} B/{\b}^n,R_A\otimes_{\k} R_B)
\]
is a dualizing complex over $A/{\a}^n \otimes_{\k} B/{\b}^n$, which establishes the theorem.
\end{proof}

Again, as in Corollary \ref{cor:main}, this generalizes immediately to formal schemes. As an immediate corollary, we obtain an adic generalization of \cite[Theorem 6(a)]{TY}. 
\begin{cor}
Let $\k$ be a field,  and let $(A,\a)$ and $(B,\b)$ be two adic noetherian Gorenstein $\k$-algebras of finite Krull dimension, such that $\widehat{A\otimes_{\k} B}$ is also noetherian of finite Krull dimension. Then $\widehat{A\otimes_{\k} B}$ is also a Gorenstein ring.
\end{cor}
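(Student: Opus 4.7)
The plan is to derive this as a direct consequence of Theorem \ref{thm:tensor-dual-adic} by taking the dualizing complexes $R_A$ and $R_B$ to be $A$ and $B$ themselves. The only real content lies in identifying $\mrm{L}\widehat{\Lambda}_I(A\otimes_{\k} B)$ with $\widehat{A\otimes_{\k} B}$ and invoking a standard characterization of Gorenstein rings.

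First, since $A$ (respectively $B$) is Gorenstein of finite Krull dimension, it has finite self-injective dimension, so $A$ is a c-dualizing complex over the adic noetherian ring $(A,\a)$ (and likewise for $B$ over $(B,\b)$); indeed, $A$ lies in $\mrm{D}^{\mrm{b}}_{\mrm{f}}(\opn{Mod} A)$, and the canonical map $A\to\mrm{R}\opn{Hom}_A(A,A)$ is tautologically an isomorphism. By hypothesis $\widehat{A\otimes_{\k} B}$ is noetherian of finite Krull dimension, so Theorem \ref{thm:tensor-dual-adic} applies with $R_A=A$ and $R_B=B$, and therefore
\[
\mrm{L}\widehat{\Lambda}_I(A\otimes_{\k} B)
\]
is a c-dualizing complex over $\widehat{A\otimes_{\k} B}$.

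Next I identify this complex with the ring $\widehat{A\otimes_{\k} B}$ itself. The module $A\otimes_{\k} B$ is free over itself, hence K-flat, so the left derived completion $\mrm{L}\widehat{\Lambda}_I$ agrees with the underived one, giving $\mrm{L}\widehat{\Lambda}_I(A\otimes_{\k} B)\cong \widehat{A\otimes_{\k} B}$ in $\mrm{D}(\opn{Mod}\widehat{A\otimes_{\k} B})$. Combining this with the previous step, $\widehat{A\otimes_{\k} B}$ is a c-dualizing complex over itself.

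Finally, the existence of a c-dualizing complex isomorphic to the ring itself forces finite self-injective dimension, and together with the standing noetherian finite Krull dimension hypothesis this is precisely the Gorenstein condition. I don't foresee any real obstacle here; the main point is simply that the two functors of derived completion and plain completion coincide on the rank one free module, and that Theorem \ref{thm:tensor-dual-adic} does all the homological work of ensuring finite injective dimension after completion.
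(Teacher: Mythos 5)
Your proposal is correct and is exactly the route the paper intends: the corollary is stated there as an immediate consequence of Theorem \ref{thm:tensor-dual-adic}, applied with $R_A=A$ and $R_B=B$ (which are c-dualizing since $A,B$ are Gorenstein of finite Krull dimension), and your observation that $\mrm{L}\widehat{\Lambda}_I(A\otimes_{\k} B)\cong\widehat{A\otimes_{\k} B}$ because $A\otimes_{\k} B$ is K-flat over itself, plus the characterization of Gorenstein noetherian rings via finite self-injective dimension, fills in precisely the details the paper leaves implicit.
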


\begin{rem}
As far as we know, all similar results in the literature concerning the conservation of homological properties of commutative noetherian rings under the tensor product operation involves a flatness assumption. In that sense, the above Corollary is different, because, to our knowledge, it is not known if in the above situation the maps $A\to \widehat{A\otimes_{\k} B}$ and $B\to \widehat{A\otimes_{\k} B}$ are flat (because it is not known if the completion map $A\otimes_{\k} B \to \widehat{A\otimes_{\k} B}$ is flat when $A\otimes_{\k} B$ is non-noetherian),  although, flatness is known to hold if $A/\a$ is essentially of finite type over $\k$ (See \cite[Proposition 7.1(b)]{AJL2}). We thus view this result as another example of the fact that weak proregularity of the ideal of definition of the adic topology can serve as a replacement for flatness of the completion map in many interesting situations.
\end{rem}

Cohen structure theorem may be stated as follows: given a noetherian local ring $(A,\mfrak{m})$, its completion $\Lambda_{\mfrak{m}}(A)$ is a quotient of a regular local ring. Our final corollary is a weak variation of it for tensor product of local rings. It says that the completion of a tensor product of local rings is a quotient of a Gorenstein ring.
\begin{cor}
Let $\k$ be a field, and let $(A,\mfrak{m})$ and $(B,\mfrak{n})$ be two noetherian local $\k$-algebras. Let $I = \mfrak{m}\otimes_{\k} B + A\otimes_{\k} \mfrak{n}$, and assume that 
\[
\widehat{A\otimes_{\k} B} := \Lambda_I(A\otimes_{\k} B)
\]
is noetherian of finite Krull dimension. Then $\widehat{A\otimes_{\k} B}$ has dualizing complexes, so it is a quotient of a Gorenstein ring of finite Krull dimension.
\end{cor}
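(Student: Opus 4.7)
The plan is to reduce to Theorem \ref{thm:tensor-dual-adic} by passing to the completions of $A$ and $B$. First, I would note that if $\widehat{A}$ and $\widehat{B}$ denote the $\mfrak{m}$-adic and $\mfrak{n}$-adic completions, and if $\widehat{I} := \mfrak{m}\widehat{A}\otimes_{\k} \widehat{B} + \widehat{A}\otimes_{\k} \mfrak{n}\widehat{B}$, then
\[
\widehat{A\otimes_{\k} B} \;\cong\; \Lambda_{\widehat{I}}(\widehat{A}\otimes_{\k} \widehat{B}).
\]
Indeed, expanding $(\mfrak{m}\otimes_{\k} B + A\otimes_{\k} \mfrak{n})^n$ shows that the filtrations $\{I^n\}$ and $\{\mfrak{m}^n\otimes_{\k} B + A\otimes_{\k} \mfrak{n}^n\}$ on $A\otimes_{\k} B$ are cofinal, while $\k$-flatness yields
\[
(A\otimes_{\k} B)/(\mfrak{m}^n\otimes_{\k} B + A\otimes_{\k} \mfrak{n}^n) \;\cong\; A/\mfrak{m}^n \otimes_{\k} B/\mfrak{n}^n \;\cong\; \widehat{A}/\mfrak{m}^n\widehat{A} \otimes_{\k} \widehat{B}/\mfrak{n}^n\widehat{B},
\]
so taking inverse limits gives the claimed identification.

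Next, I would invoke Cohen's structure theorem: the complete noetherian local rings $\widehat{A}$ and $\widehat{B}$ are quotients of complete regular local rings, hence quotients of Gorenstein rings of finite Krull dimension. By \cite[Proposition V.2.4]{RD} they therefore admit dualizing complexes, and since each is complete in its adic topology these are automatically c-dualizing complexes in the sense recalled at the start of Section 2. Thus the hypotheses of Theorem \ref{thm:tensor-dual-adic} are satisfied for the adic rings $(\widehat{A}, \mfrak{m}\widehat{A})$ and $(\widehat{B}, \mfrak{n}\widehat{B})$.

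Since by hypothesis $\widehat{A\otimes_{\k} B}$ is noetherian of finite Krull dimension, Theorem \ref{thm:tensor-dual-adic} produces a c-dualizing complex (and a t-dualizing complex) over $\widehat{A\otimes_{\k} B}$. In particular $\widehat{A\otimes_{\k} B}$ has a dualizing complex, and Proposition \ref{prop:Gorfinite} then shows it is a quotient of a (finite-type over itself, and hence in particular a) Gorenstein ring of finite Krull dimension, as required. The only genuinely new point in the argument is the identification of the completed tensor product with the completion of the tensor product of completions; every other step is a direct application of a result already established earlier in the paper, so no real obstacle remains.
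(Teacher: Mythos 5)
Your proposal is correct and follows essentially the same route as the paper's own proof: Cohen's structure theorem gives (c-)dualizing complexes over $\widehat{A}$ and $\widehat{B}$, the cofinal-filtration argument identifies $\widehat{A\otimes_{\k} B}$ with the completion of $\widehat{A}\otimes_{\k}\widehat{B}$ (the paper references the proof of Lemma \ref{lem:reduction-to-discrete} for this step, which you simply spell out), and then Theorem \ref{thm:tensor-dual-adic} plus Kawasaki's theorem (Proposition \ref{prop:Gorfinite}) finish the argument exactly as in the paper.
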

\begin{proof}
By Cohen structure theorem, the rings $\widehat{A}$ and $\widehat{B}$ have dualizing complexes. Since there are isomorphisms $A/{\mfrak{m}}^n \cong \widehat{A}/ (\mfrak{m}\cdot \widehat{A})^n$ and $B/{\mfrak{n}}^n \cong \widehat{B}/ (\mfrak{n}\cdot \widehat{B})^n$, we see as in the proof of Lemma \ref{lem:reduction-to-discrete} that
\[
\widehat{A\otimes_{\k} B} \cong \varprojlim (A/{\mfrak{m}}^n \otimes_{\k} B/{\mfrak{n}}^n) \cong 
\varprojlim (\widehat{A}/ (\mfrak{m}\cdot \widehat{A})^n \otimes_{\k} \widehat{B}/ (\mfrak{n}\cdot \widehat{B})^n ) \cong \widehat{\widehat{A}\otimes_{\k} \widehat{B}},
\]
where $\widehat{\widehat{A}\otimes_{\k} \widehat{B}} := \Lambda_{J}(\widehat{A}\otimes_{\k} \widehat{B})$,
$J := (\mfrak{m}\cdot\widehat{A})\otimes_{\k} \widehat{B} + \widehat{A}\otimes_{\k} (\mfrak{n}\cdot \widehat{B})$. By Theorem \ref{thm:tensor-dual-adic}, the ring $\widehat{\widehat{A}\otimes_{\k} \widehat{B}}$ has dualizing complexes, 
so the isomorphic ring $\widehat{A\otimes_{\k} B}$ also has dualizing complexes. Hence, by Kawasaki's theorem, it is a quotient of a Gorenstein ring of finite Krull dimension.
\end{proof}

\end{document}